\newtheorem{lem}{Lemma}[section]
\newtheorem{prop}[lem]{Proposition}
\newtheorem{cor}[lem]{Corollary}
\newtheorem{theorem}[lem]{Theorem}
\def\A{\mathcal{A}}
\def\C{\mathcal{C}}
\def\D{\mathcal{D}}
\def\G{\mathcal{G}}
\def\K{\mathcal{K}}
\def\T{\mathcal{T}}
\def\W{\mathcal{W}}
\def\DD{\mathscr{D}}
\def\TT{\mathscr{T}}
\def\ZZZ{\mathbb{Z}}
\def\ol{\overline}
\def\Hom{\operatorname{Hom}}
\def\End{\operatorname{End}}
\def\Ext{\operatorname{Ext}}
\def\add{\operatorname{add}}
\def\ind{\operatorname{ind}}
\def\ql{\operatorname{ql}}
\title{Cluster structures from 2-Calabi-Yau categories with loops}
\author{Aslak Bakke Buan}
\address{Institutt for matematiske fag\\
Norges teknisk-naturvitenskapelige universitet\\
N-7491 Trondheim\\
Norway}
\email{aslakb@math.ntnu.no}
\author{Robert J. Marsh}
\address{Department of Pure Mathematics\\
University of Leeds\\
Leeds LS2 9JT\\
England}
\email{marsh@maths.leeds.ac.uk}
\author{Dagfinn F. Vatne}
\address{Institutt for matematiske fag\\
Norges teknisk-naturvitenskapelige universitet\\
N-7491 Trondheim\\
Norway}
\email{dvatne@math.ntnu.no}
\begin{document}

\maketitle

\begin{abstract}
We generalise the notion of \emph{cluster structures} from the work of
Buan-Iyama-Reiten-Scott to include situations where the endomorphism
rings of the clusters may have loops. We show that in a Hom-finite
2-Calabi-Yau category, the set of maximal rigid objects satisfies these
axioms whenever there are no 2-cycles in the quivers of their endomorphism
rings.

We apply this result to the cluster category of a tube, and show that this
category forms a good model for the combinatorics of a type $B$ cluster
algebra.
\end{abstract}

\section*{Introduction}
Since the introduction of cluster algebras by Fomin and Zelevinsky \cite{fz1},
relationships between such algebras and interesting topics in several branches
of mathematics have emerged.

The project of modelling cluster algebras in a representation theoretic
setting was initiated in \cite{mrz}. Inspired by this, cluster categories were
defined in \cite{bmrrt} to be certain orbit categories obtained from the
derived category of Hom-finite hereditary abelian categories. These categories
have been widely studied for the case when the initial hereditary category is
the category of finite dimensional representations of an acyclic quiver
$Q$. (When $Q$ is a quiver
with underlying graph $A_n$, the cluster category was independently defined in
\cite{ccs}.) It has then been shown that the indecomposable rigid objects are
in bijection with the cluster variables in the cluster algebra $\A_Q$
associated with the same quiver, and under this bijection the clusters
correspond to the maximal rigid objects (\cite{ck} based on \cite{cc},
see also \cite{bmrtck}). Moreover, by \cite{bmr}, the quiver of the
endomorphism ring of a maximal rigid object is the same as the quiver
of the corresponding cluster.

The last phenomenon also appears for maximal rigid modules in the stable module
category of preprojective algebras of simply laced Dynkin type \cite{gls},
which is another example of a Hom-finite 2-Calabi-Yau triangulated
category. Inspired by this and \cite{iy}, \cite{kr}, an axiomatic framework
for mutation in 2-CY categories
was defined in \cite{birs}. The essential features were considered
to be: the unique exchange of indecomposable summands; the fact that exchange
pairs were related by approximation triangles; and the fact that on the level
of endomorphism rings, exchange of indecomposable summands led to
Fomin-Zelevinsky quiver mutation on the Gabriel quivers. For the third of
these features to make sense, one must require that the endomorphism rings
have no loops or 2-cycles in their quivers. In \cite{birs} it was also shown
that the collection of maximal rigid objects in any Hom-finite 2-CY
triangulated category fulfils these axioms for cluster structures whenever
the quivers of their endomorphism rings do not have loops or 2-cycles.

The cluster structures from \cite{birs} have two limitations: Firstly,
there exist Hom-finite 2-CY triangulated categories where the endomorphism
rings of maximal rigid objects do have loops and 2-cycles in their
quivers. The unique exchange property holds also in these categories
\cite{iy}, but FZ quiver mutation does not make sense in this
setting. Secondly, the cluster algebras which can be modelled from the
cases studied in \cite{birs} are the ones defined from quivers (equivalently,
skew-symmetric matrices), while cluster algebras can be defined also from more
general matrices.

The aim of this paper is to extend the notion of cluster structures from
\cite{birs}. We show that the set of maximal rigid objects in a Hom-finite
2-CY triangulated category satisfies this new
definition of cluster structure regardless of whether the endomorphism rings
have loops or not. (We must however assume that the quivers do not have
2-cycles.) One effect of this is that we will also relax the second
limitation, since the cluster algebras which can be modelled in this new
setting but not in the setting of \cite{birs} are defined from matrices which
are not necessarily skew-symmetric.

While previous investigations of cluster structures have regarded the quiver
of the endomorphism ring as the essential combinatorial data, our approach is
to emphasize the exchange triangles instead. We collect the information from
the exchange triangles in a matrix and require for our cluster structure that
exchange of indecomposable objects leads to FZ matrix mutation of this
matrix. In the no-loop situation considered in \cite{birs}, the matrix
defined here is the same as the one describing the quiver of the endomorphism
ring, so our definition is an extension of the definition in \cite{birs}.

In the cluster category defined from a module category of a hereditary
algebra, the maximal rigid objects are the same as the \emph{cluster-tilting}
objects. In general, however, the cluster-tilting condition is stronger, and
there exist categories where the maximal rigid objects are not
cluster-tilting. Examples of this in a geometrical setting are given in
\cite{bikr}. In this paper we present a class of examples from a purely
representation-theoretical setting, namely the cluster categories $\C_{\T_n}$
defined from \emph{tubes} $\T_n$. Here, the tube $\T_n$ is the category of
nilpotent representations of a quiver with underlying graph $\tilde{A}_{n-1}$
and with cyclic orientation. The category $\C_{\T_n}$ has also been studied in
\cite{bkl}.

An interesting aspect of cluster categories from tubes is that the
endomorphism rings of the maximal rigid objects have quivers with loops, but
not 2-cycles. Thus they are covered by the definition of cluster structures in
this paper,
but not by the one in \cite{birs}. We will show that the set of rigid objects
in $\C_{\T_n}$ is a model of the exchange combinatorics of a type $B$ cluster
algebra. To this end, we give a bijection between the indecomposable rigid
objects in $\C_{\T_n}$ and the cluster variables in a type $B_{n-1}$
cluster algebra such that the maximal rigid objects correspond to the
clusters. This bijection is via the \emph{cyclohedron}, or Bott-Taubes
polytope \cite{bt}. Also, we will see that the matrix defined from the
exchange triangles associated to a maximal rigid object is the same as the one
belonging to the corresponding cluster in the cluster algebra.

The article is organised as follows. In Section
\ref{section:clusterstructures} we give the new definition of cluster
structures and show that the set of maximal rigid objects in a Hom-finite 2-CY
category satisfies this definition whenever there are no 2-cycles in the
quivers of their endomorphism rings. In Section \ref{section:maxrigidintubes} we
give a complete description of the maximal rigid objects in the cluster
category of a tube. Finally, in Section \ref{section:relationshiptypeB}, we
show that the cluster structure in a cluster tube forms a good model of the
combinatorics of a type $B$ cluster algebra.

\section{Cluster structures} \label{section:clusterstructures}
In this section we generalise the notion of \emph{cluster structures} from
\cite{birs} to include situations where the quivers of the clusters may have
loops. We then proceed to show that the set of maximal rigid objects in a
Hom-finite 2-Calabi-Yau category admits a cluster structure with this new
definition, under the assumption that the Gabriel quivers of their
endomorphism rings do not have 2-cycles.

Let $k$ be some field. By a Hom-finite $k$-category we will mean a category
$\C$ where $\Hom_{\C}(X,Y)$ is a finite dimensional $k$-vector space for all
pairs of indecomposable objects $X$ and $Y$. We will normally suppress the
field $k$. A triangulated category $\C$ is said to be Calabi-Yau of dimension
2, or 2-CY, if $D\Ext_{\C}^i(X,Y)\simeq \Ext_{\C}^{2-i}(Y,X)$ for all objects
$X,Y$ of $\C$ and all $i$ in $\ZZZ$. For the remainder of this section, $\C$
will denote a Hom-finite 2-Calabi-Yau triangulated category.

We now recall the definition of a \emph{weak cluster structure} from
\cite{birs}. Let $\TT$ be a collection of sets of non-isomorphic indecomposable
objects of $\C$. Each set $T$ of indecomposables which is an element of $\TT$
is called a \emph{precluster}. The collection $\TT$ is said to have a
\emph{weak cluster structure} if the following two conditions are met:
\begin{itemize}
\item[(a)] For each precluster $T=\ol{T} \overset{\cdot}{\cup} \{ M\}$ with
  $M$ indecomposable, there exists a unique indecomposable $M^*\not \simeq
  M$ such that the disjoint union $T^*=\ol{T}\overset{\cdot}{\cup} \{ M^*\}$
  is a precluster. 
\item[(b)] $M$ and $M^*$ are related by triangles 
\[M^*\overset{f}{\to} U_{M,\ol{T}}\overset{g}{\to} M\to \quad \textrm{and}
\quad M\overset{s}{\to} U'_{M,\ol{T}}\overset{t}{\to} M^*\to\]
where $f$ and $s$ are minimal left $\add{\ol{T}}$-approximations and $g$ and
$t$ are minimal right $\add{\ol{T}}$-approximations. These triangles are
called the \emph{exchange triangles} of $M$ (equivalently, of $M^*$) with
respect to $\ol{T}$.
\end{itemize}
We will not distinguish between a precluster and the object obtained by taking
the direct sum of the indecomposable objects which form the precluster. The
same goes for subsets of preclusters.

Assume now that $\TT$ has a weak cluster structure. For each $T=\{ T_i\}_{i\in
  I}$ in $\TT$ we define a matrix $B_T=(b_{ij})$ by
\[
b_{ij}=\alpha_{U'_{T_i,T/T_i}}T_j-\alpha_{U_{T_i,T/T_i}}T_j
\]
where $\alpha_YX$ denotes the multiplicity of $X$ as a direct summand of
$Y$. So the matrix $B_T$ records, for each $T_i, T_j\in T$, the
difference between the multiplicities of $T_j$ in the two
exchange triangles for $T_i$. Note
that $b_{ii}=0$ for all $i$, since $T_i$ does not appear as a summand in the
target (resp. source) of a left (resp. right) $\add(T/T_i)$-approximation.

Recall Fomin-Zelevinsky \emph{matrix mutation}, defined in \cite{fz1}: For a
matrix $B=(b_{ij})$ the mutation at $k$ is given by $\mu_k(B)=(b^*_{ij})$, where
\[
b^*_{ij} = \left\{
\begin{array}{ll}
-b_{ij} & i=k\textrm{ or }j=k \\
b_{ij}+\frac{|b_{ik}|b_{kj}+b_{ik}|b_{kj}|}{2} & i,j\neq k
\end{array} \right.
\]
Note in particular that for $i,j\neq k$, we have $b_{ij}\neq b^*_{ij}$ if
and only if $b_{ik}$ and $b_{kj}$ are both positive or both negative.

We say that $\TT$ has a \emph{cluster structure} if $\TT$ has a weak cluster
structure, and in addition the following conditions are satisfied:
\begin{itemize}
\item[(c)] For each $T\in \TT$ and each $T_i\in T$, the objects $U_{T_i,T/T_i}$
  and $U'_{T_i,T/T_i}$ have no common direct summands.
\item[(d)] If $T_k$ is an indecomposable object and $T=\ol{T}\cup \{ T_k\}$
  and $T^*=\ol{T}\cup \{ T_k^*\}$ are preclusters, then $B_T$ and $B_{T^*}$
  are related by Fomin-Zelevinsky matrix mutation at $k$.
\end{itemize}
In this case, we call the elements of $\TT$ \emph{clusters}.

An interpretation of condition (c) above is that the endomorphism rings of
the clusters have Gabriel quivers which do not have 2-cycles. It should also
be noted that if there are no loops at the vertices corresponding to $T_i$ and
$T_j$ in these quivers, then the multiplicity of $T_j$ as a summand in the
exchange triangle for $T_i$ will equal the number of arrows between these two
vertices. In particular, if there are no loops at any of the vertices, $B_T$
is skew-symmetric and can be considered as a record of the quiver. Condition
(d) then reduces to FZ quiver mutation, so our definition coincides with the
definition in \cite{birs} in this case.

An object $T$ in a triangulated category $\K$ is said to be \emph{rigid} if
$\Ext_{\K}^1(T,T)=0$. It is called \emph{maximal rigid} if it is maximal with
this property, that is, $\Ext_{\K}^1(T\amalg X,T\amalg X)=0$ implies that
$X\in \add T$.

The collection of maximal rigid objects in any Hom-finite 2-CY triangulated
category has a weak cluster structure. This follows from \cite{iy}, as stated in
Theorem I.1.10 (a) of \cite{birs}. We can now prove a stronger version of
part (b) of the same theorem:

\begin{theorem} \label{thmmaxrigidcluster}
Let $\C$ be a Hom-finite 2-Calabi-Yau triangulated category, and let $\TT$ be
the collection of maximal rigid objects in $\C$. Assume $\TT$ satisfies
condition (c) above. Then $\TT$ has a cluster structure.
\end{theorem}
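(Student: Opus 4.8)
The plan is to verify the two remaining axioms, (c) and (d), for the collection $\TT$ of maximal rigid objects. Axiom (c) is assumed in the hypothesis, so the entire content is the verification of (d): that exchanging an indecomposable summand $T_k$ of a cluster $T = \ol{T} \cup \{T_k\}$ induces Fomin--Zelevinsky matrix mutation $\mu_k$ on the matrix $B_T$ recording the multiplicities from the exchange triangles. The case $i = k$ or $j = k$ (the sign change $b^*_{ij} = -b_{ij}$) is immediate, since swapping $M = T_k$ with $M^* = T_k^*$ literally interchanges the two exchange triangles, hence interchanges the roles of $U$ and $U'$ and thus negates the defining difference; the loop entries $b^*_{kk} = b_{kk} = 0$ are also automatic. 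So the real work is the entries $b^*_{ij}$ for $i, j \neq k$.

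For those entries I would argue as follows. Fix $i, j \neq k$ and set $\ol{T}' = T/\{T_i, T_k\}$, so that both $T$ and $T^*$ restrict to preclusters containing $\ol{T}'$ together with $T_k$ (resp.\ $T_k^*$). The exchange triangle $T_i \to U \to T_i^{\mathrm{exch}}$ computed relative to $\ol{T} = \ol{T}' \cup \{T_k\}$ must be compared with the one computed relative to $\ol{T}^* = \ol{T}' \cup \{T_k^*\}$; the multiplicity $b_{ij}$ only involves summands from $\ol{T}'$, which is common to both. The strategy is to take a minimal left (or right) $\add\ol{T}'$-approximation of $T_i$ and then complete it, through one further step, to an $\add(\ol{T}' \cup \{T_k\})$- or $\add(\ol{T}' \cup \{T_k^*\})$-approximation, using the octahedral axiom to splice the exchange triangle of $T_k$ with $T_k^*$ into the picture. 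This produces a commutative diagram relating $U_{T_i, \ol{T}}$, $U_{T_i, \ol{T}^*}$, and copies of $T_k, T_k^*$, from which one reads off how the multiplicity of $T_j$ changes. The key numerical input is that the number of copies of $T_k$ appearing in the exchange triangle for $T_i$ relative to $\ol{T}'$ equals $|b_{ik}|$ (with a sign governed by which of the two triangles it appears in — this is where axiom (c), no common summands, is used to make the sign well-defined), and symmetrically the passage $T_k \leadsto T_k^*$ contributes $b_{kj}$ copies of $T_j$; assembling these gives exactly the correction term $\tfrac{1}{2}(|b_{ik}|b_{kj} + b_{ik}|b_{kj}|)$.

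More concretely, I expect the argument to run via the known behaviour of exchange triangles under "partial mutation": writing down the left exchange triangle $T_i \xrightarrow{f} U \to T_i^*$ for $T_i$ with respect to $\ol{T}'$, the minimal left $\add(\ol{T}' \cup \{T_k\})$-approximation of $T_i$ factors through $f$, and the cokernel cone is built from the exchange triangle of $T_k$ over $\ol{T}'$. Rotating and applying the octahedron to the composite $T_i \to U \to (\text{involving } T_k)$ yields a triangle whose outer term, after removing the non-$T_k$ part, is governed by how many maps $T_k \to T_i^*$ (equivalently, by 2-CY duality, $\Ext^1$-classes) survive. Tracking the $T_j$-multiplicity through this and then doing the same computation over $\ol{T}' \cup \{T_k^*\}$, and subtracting, should produce the FZ formula on the nose. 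I would be careful to invoke Hom-finiteness and the 2-CY property to guarantee all approximations exist and are finite, and to invoke the uniqueness in axiom (a) together with condition (c) to pin down signs.

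The main obstacle, and the heart of the proof, is precisely this sign bookkeeping in the $i, j \neq k$ case: the FZ correction term is asymmetric in a way that depends on the signs of $b_{ik}$ and $b_{kj}$, and one must show $b_{ij}$ changes exactly when $b_{ik}, b_{kj}$ are both positive or both negative. This requires correctly identifying, in each of the four sign configurations, which exchange triangle (the $U$-triangle or the $U'$-triangle) of $T_i$ over $\ol{T}'$ contains copies of $T_k$ and which exchange triangle of $T_k$ over $\ol{T}'$ (now viewed as providing the "extension" data when $T_k$ is replaced by $T_k^*$) contributes copies of $T_j$, and then composing the relevant triangles via the octahedral axiom in the correct direction. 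Condition (c) is essential here: without it, $T_k$ could appear in both exchange triangles for $T_i$ and the "net multiplicity" $b_{ik}$ would not capture the approximation structure faithfully. I would organise the final calculation as a sequence of octahedral diagrams, one per sign case (or two, using the symmetry $T \leftrightarrow T^*$ which swaps the sign of everything to halve the work), and verify the multiplicity count in each.
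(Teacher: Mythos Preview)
Your overall strategy matches the paper's: handle row and column $k$ by the interchange of the two exchange triangles, and attack the remaining entries $b^*_{ij}$ ($i,j\neq k$) via the octahedral axiom with a case split on the sign of $b_{ik}$, using condition (c) to make signs unambiguous. Two concrete points deserve correction.

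First, the paper does \emph{not} pass through an $\add\ol{T}'$-approximation of $T_i$ with $\ol{T}' = T/\{T_i,T_k\}$. It starts instead from the actual exchange triangles of $T_i$ over $T/T_i$ (which already contain copies of $T_k$), takes their direct sum with $\alpha_i(T_k)$ copies of the exchange triangle for $T_k$, and applies the octahedron to the resulting composite to produce a possibly non-minimal $\add(T^*/T_i)$-approximation of $T_i$; the $T_j$-multiplicity is then read off from this enlarged triangle and the automorphism of the redundant summand is cancelled. Your route via $\ol{T}'$ has a technical problem: the cone of a minimal left $\add\ol{T}'$-approximation of $T_i$ is not in general the indecomposable complement $T_i^*$ (since $\ol{T}'\cup\{T_i\}$ is not maximal rigid), so your phrase ``the left exchange triangle $T_i\to U\to T_i^*$ for $T_i$ with respect to $\ol{T}'$'' is not well-posed; moreover the factoring claim ``the minimal left $\add(\ol{T}' \cup \{T_k\})$-approximation of $T_i$ factors through $f$'' goes the wrong way (it is $f$ that factors through the larger approximation, by universality).

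Second, the paper isolates as a preliminary lemma that $B_T$ is sign-skew-symmetric ($b_{ij}<0 \Leftrightarrow b_{ji}>0$), proved by translating between summands of middle terms of approximation triangles and arrows in the Gabriel quiver of $\End_{\C}(T)$. This is precisely what allows one, having fixed the sign of $b_{ik}$, to know which of the two exchange triangles for $T_k$ contains $T_i$ as a summand, and hence which one to feed into the octahedron. You gesture at this identification in your last paragraph but do not isolate it; without it the case analysis cannot get off the ground. With these two adjustments (start from the exchange triangles over $T/T_i$ rather than over $\ol{T}'$, and establish sign-skew-symmetry first), your case split collapses to the paper's three cases $b_{ik}=0$, $b_{ik}<0$, $b_{ik}>0$, and the octahedral computation proceeds as you anticipate.
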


The proof of Theorem \ref{thmmaxrigidcluster} follows the same lines as the
proof of Theorem I.1.6. in \cite{birs}. We need to show that when an
indecomposable summand of a maximal rigid object is exchanged, the change in
the matrix is given by Fomin-Zelevinsky matrix mutation. The fact that the
matrices $B_T$ for maximal rigid $T$ are not necessarily skew-symmetric forces
us to prove different cases separately. For the proof we will need the
following lemma:

\begin{lem} \label{signskewsymm}
In the situation of the theorem, for any maximal rigid object $T$, $B_T$ is
sign skew symmetric, i.e., for all $i,j$, we have $b_{ij}<0$ if and only if
$b_{ji}>0$.
\end{lem}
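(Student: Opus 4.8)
The plan is to work directly with the two exchange triangles for $T_i$ with respect to $T/T_i$, namely $T_i^* \to U' \to T_i \to$ and $T_i \to U \to T_i^* \to$ (writing $U = U_{T_i,T/T_i}$ and $U' = U'_{T_i,T/T_i}$), and to track the appearance of a fixed indecomposable $T_j$ in $U$ and $U'$ by applying the functors $\Hom_\C(T_j,-)$ and $\Hom_\C(-,T_j)$ and using 2-Calabi-Yau duality. Recall $b_{ij} = \alpha_{U'}T_j - \alpha_U T_j$, so $b_{ij} < 0$ means $T_j$ occurs strictly more often in $U$ than in $U'$, and $b_{ji} > 0$ means $T_i$ occurs strictly more often in the "$U'$-triangle" for $T_j$ than in the "$U$-triangle" for $T_j$. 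The goal is to express both conditions in terms of a common invariant — the natural candidate being something like $\dim \Hom_\C(T_i, U)$ versus $\dim \Hom_\C(T_i, U')$, or an Ext-space between $T_i$ and $T_j$ — so that the equivalence becomes visible.

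First I would apply $\Hom_\C(T_j,-)$ to the triangle $T_i^* \overset{f}{\to} U' \overset{g}{\to} T_i \to$. Since $f$ is a minimal left $\add(T/T_i)$-approximation, the map $\Hom(T_j, U') \to \Hom(T_j, T_i)$ induced by $g$ is, by the approximation property and minimality, controlled: in fact $\Hom(T_j,f)$ is injective when $T_j \in \add(T/T_i)$ (no negative self-extensions of the rigid object $T/T_i$), so one gets a short exact sequence $0 \to \Hom(T_j,T_i^*) \to \Hom(T_j,U') \to \Hom(T_j,T_i) \to \Ext^1(T_j,T_i^*) \to 0$, and the connecting map to $\Ext^1$ vanishes on the image of $g$. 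Running the analogous computation on $T_i \overset{s}{\to} U \overset{t}{\to} T_i^* \to$ with $\Hom(T_j,-)$, and also applying $\Hom(-,T_j)$ to both triangles and invoking 2-CY duality $D\Ext^1(X,Y) \cong \Ext^1(Y,X)$, I would extract formulas for $\alpha_U T_j$ and $\alpha_{U'}T_j$ in terms of dimensions of Hom- and Ext-spaces between $T_i, T_i^*$ and $T_j$. The key input is that in a minimal approximation triangle the multiplicity of $T_j$ in the middle term is detected precisely by the rank of the relevant induced map on Hom-spaces, together with the fact that $\add(T/T_i)$ is functorially finite and rigid.

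The decisive step is then a symmetry argument: the quantity $b_{ij}$ should turn out to equal, up to sign, something like $\dim \Hom_\C(T_i^*, T_j) - \dim \Hom_\C(T_j, T_i)$ or an expression manifestly antisymmetric under swapping $i$ and $j$ once one also swaps the roles of the two triangles — and here the 2-CY property is what converts $\Ext^1(T_j, T_i^*)$ on one side into $\Ext^1(T_i^*, T_j)$ on the other. I expect the computation to show $b_{ij}$ and $-b_{ji}$ are both expressible via the same Hom/Ext data, so that $b_{ij} < 0 \iff b_{ji} > 0$ follows. I would also need condition (c) here — that $U$ and $U'$ share no common summand — to ensure that for each fixed $j$ at most one of $\alpha_U T_j$, $\alpha_{U'}T_j$ is nonzero, which rules out cancellation and makes the sign of $b_{ij}$ genuinely record which of the two triangles $T_j$ lives in.

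The main obstacle will be handling the contribution of $T_i^*$ carefully: $T_i^*$ need not lie in $\add(T/T_i)$, so the Hom-spaces $\Hom(T_j, T_i^*)$ and $\Hom(T_i^*, T_j)$ and the Ext-terms involving $T_i^*$ do not vanish automatically, and one must show they enter $b_{ij}$ and $b_{ji}$ in a matched way rather than introducing an asymmetric defect. The presence of loops — i.e. the failure of $B_T$ to be skew-symmetric — is exactly the phenomenon that these $T_i^*$-terms record, so the content of the lemma is that even though the magnitudes $|b_{ij}|$ and $|b_{ji}|$ may differ, their signs are still coupled. I would isolate this by proving a short auxiliary claim: if $T_j$ is a summand of $U$ then $\Hom(T_i, T_j) \neq 0$ and $\Ext^1(T_i^*, T_j) = 0$, while if $T_j$ is a summand of $U'$ then the reverse holds; combined with 2-CY duality applied symmetrically in $i$ and $j$, this forces the sign correlation.
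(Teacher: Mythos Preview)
Your plan over-engineers the problem and, in the process, misses the single idea that makes the lemma immediate. The paper's proof is essentially one line: under condition (c), $b_{ij}<0$ means $T_j$ is a direct summand of the minimal right $\add(T/T_i)$-approximation $U_{T_i,T/T_i}\to T_i$; this is equivalent to the existence of a map $T_j\to T_i$ not factoring through $\add(T/(T_i\amalg T_j))$, i.e.\ an arrow $T_j\to T_i$ in the Gabriel quiver of $\End_{\C}(T)$; and that in turn is equivalent to $T_i$ appearing in the minimal left $\add(T/T_j)$-approximation $T_j\to U'_{T_j,T/T_j}$, i.e.\ $b_{ji}>0$. No long exact sequences, no 2-CY duality, no tracking of $T_i^*$ are needed --- the whole content is the standard dictionary between summands of minimal approximations and arrows in the Gabriel quiver.

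Your proposed route has a concrete flaw beyond being circuitous. The auxiliary claim you isolate --- ``if $T_j$ is a summand of $U$ then $\Hom(T_i,T_j)\neq 0$ and $\Ext^1(T_i^*,T_j)=0$, while if $T_j$ is a summand of $U'$ then the reverse holds'' --- does not distinguish the two cases. Since $\ol{T}\amalg T_i^*$ is again maximal rigid, $\Ext^1_{\C}(T_i^*,T_j)=0$ holds for \emph{every} $T_j$ in $\ol{T}$, regardless of which triangle $T_j$ sits in; and $\Hom_{\C}(T_i,T_j)\neq 0$ can perfectly well hold even when $T_j$ only occurs in the right-approximation triangle, because all those maps may factor through other summands. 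The quantity you need is not a raw Hom-dimension but the dimension of the top $\Hom(T_i,T_j)/\operatorname{rad}$, which is precisely the number of Gabriel-quiver arrows --- and once you say that, you have the paper's argument. (Minor point: your labelling of $U$ and $U'$ is swapped relative to the paper's conventions; in the paper $U$ sits in the triangle $T_i^*\to U\to T_i$ and $U'$ in $T_i\to U'\to T_i^*$.)
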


\begin{proof}
As remarked after the definition of the matrix $B_T$, the diagonal entries
$b_{ii}$ vanish, so the statement in the lemma is clearly true for these.

Now assume $b_{ij}<0$ for some $i\neq j$. Then $T_j$ is a summand of the
middle term of the exchange triangle
\[
T^*_i \to U_{T_i,T/T_i}\to T_i\to
\]
Since the second map is a minimal right $\add(T/T_i)$-approximation,
this means that there exists a map $T_j\to T_i$ which does not factor through
any other object in $\add(T/(T_i\amalg T_j))$. In other words, there
is an arrow $T_j\to T_i$ in the quiver of $\End_{\C}(T)$. This in turn
implies that $T_i$ is a summand of the middle term in the exchange triangle
\[
T_j\to U'_{T_j,T/T_j}\to T^*_j\to
\]
and thus $b_{ji}>0$.
\end{proof}

\begin{proof}[Proof of Theorem \ref{thmmaxrigidcluster}]
Let $T=\amalg_{i=1}^{n}T_i$ be a maximal rigid object in $\C$. Suppose we want
to exchange the indecomposable summand $T_k$. Consider
$\ol{T}=\amalg_{i\neq k}T_i$ and the maximal rigid object $T^*=\ol{T}\amalg
T_k^*$. We want to show that the matrices $B_T=(b_{ij})$ and
$B_{T^*}=(b^*_{ij})$ are related by FZ matrix mutation. Pick two
indecomposable summands $T_i\not \simeq T_j$ of $\ol{T}$, so $i,j$ and $k$ are
all distinct.

For the purposes of this proof we introduce some notation. Let
\[
\alpha_a(T_b)=\alpha_{U_{T_a,T/T_a}}T_b
\]
that is, the multiplicity of $T_b$ as a direct summand of the middle term in
the exchange triangle ending in $T_a$ with respect to $T/T_a$. Similarly, we
denote the multiplicity of $T_b$ in the other triangle by
\[
\alpha'_a(T_b)=\alpha_{U'_{T_a,T/T_a}}T_b
\]
Note that under the assumption of condition (c), at least one of these two
numbers will be zero for any choice of $a,b$.

The exchange triangles for $T_k$ with respect to $\ol{T}$ are
\begin{equation} \label{exTk1}
T_k^* \to T_i^{\alpha_k(T_i)}\amalg T_j^{\alpha_k(T_j)}\amalg V_k \to T_k \to
\end{equation}
\begin{equation} \label{exTk2}
T_k \to T_i^{\alpha'_k(T_i)}\amalg T_j^{\alpha'_k(T_j)}\amalg V'_k \to T_k^* \to
\end{equation}
Note that $V_k$ and $V'_k$ do not have $T_i$ or $T_j$ as direct summand.
These are also the exchange triangles for $T_k^*$ with respect to
$\ol{T}$, but the roles of the middle terms are interchanged. It follows
immediately that $b^*_{ki}=-b_{ki}$, and the FZ formula holds for row $k$ of
the matrix.

In the rest of the proof we study the changes in row $i$, where $i\neq k$. For
this we also need the exchange triangles for $T_i$ with respect to $T/T_i$,
which are
\begin{equation} \label{exTi1}
T^*_i \overset{\phi_1}{\longrightarrow} T_j^{\alpha_i(T_j)}\amalg
T_k^{\alpha_i(T_k)}\amalg V_i \overset{\phi_2}{\longrightarrow} T_i \to
\end{equation}
\begin{equation} \label{exTi2}
T_i \overset{\phi_3}{\longrightarrow} T_j^{\alpha'_i(T_j)}\amalg
T_k^{\alpha'_i(T_k)}\amalg V'_i \overset{\phi_4}{\longrightarrow} T_i^* \to
\end{equation}
Again, note that $V_i$ and $V'_i$ do not have $T_j$ or $T_k$ as direct summand.
From these triangles we must collect information about the exchange triangles
of $T_i$ with respect to $\widetilde{T}=T/(T_i\amalg T_k) \amalg T^*_k$, since
these determine the entries in row $i$ of $B_{T^*}$.

We will consider three different cases, depending on whether $b_{ik}$ is
positive, negative or zero.

\textbf{Case I:} Assume $b_{ik}=0$, that is, $T_k$ does not
appear in any of the exchange triangles for $T_i$. Then, by Lemma
\ref{signskewsymm}, we have $b_{ki}=0$ as well, and by the above,
$b^*_{ki}=0$. By
appealing once more to Lemma \ref{signskewsymm}, we see that $T^*_k$ does not
appear in the exchange triangles for $T_i$ with respect to $\widetilde{T}$. This
is enough to establish that the map $\phi_1$ in triangle \eqref{exTi1} is also a
minimal left $\add \widetilde{T}$-approximation. Similarly, the map $\phi_4$ in
triangle \eqref{exTi2} is a minimal right $\add
\widetilde{T}$-approximation. This
means that the triangles \eqref{exTi1} and \eqref{exTi2} are also the exchange
triangles for $T_i$ with respect to $\widetilde{T}$. Thus the entries in row $i$
remain unchanged and behave according to the FZ rule in this situation. (Note
also that this proves that $T^*_i$ is the complement of $T_i$ both before and
after we have exchanged a summand $T_k$ with $b_{ik}=0$.)

\textbf{Case II:} Suppose now that $b_{ik}<0$, which means that $T_k$ appears
as a summand in \eqref{exTi1}, while $\alpha'_i(T_k)=0$. By Lemma
\ref{signskewsymm}, $b_{ki}>0$, which means that $T_i$ appears in
\eqref{exTk2}, not in \eqref{exTk1}. Our strategy is to
construct redundant versions of the exchange triangles of $T_i$ with respect to
$\widetilde{T}$. We will use the triangle
\begin{equation} \label{bigtriangle1}
\left( T^*_k\right)^{\alpha_i(T_k)}\longrightarrow
\begin{array}{c}
\left( T_j^{\alpha_i(T_j)}\amalg V_i\right) \\
\amalg \\
\left( T_j^{\alpha_k(T_j)}\amalg
  V_k\right)^{\alpha_i(T_k)} \\
\end{array}
\overset{\phi'_1}{\longrightarrow}
\begin{array}{c}
\left( T_j^{\alpha_i(T_j)}\amalg V_i\right) \\
\amalg \\
T_k^{\alpha_i(T_k)}
\end{array}
\longrightarrow
\end{equation}
which is the direct sum of $\alpha_i(T_k)$ copies of \eqref{exTk1} and the
identity map of $T_j^{\alpha_i(T_j)}\amalg V_i$. Applying the octahedral axiom to
the composition of the map $\phi'_1$ in \eqref{bigtriangle1} and the map
$\phi_2$ in \eqref{exTi1} yields the following commutative diagram in which
the middle two rows and middle two columns are triangles.
\[
\xymatrix{
&&\eqref{bigtriangle1}&& \\
&\left( T^*_k\right)^{\alpha_i(T_k)}[1]\ar@{=}[r]&\left(
  T^*_k\right)^{\alpha_i(T_k)}[1]&& \\
T_i[-1]\ar[r]\ar@{=}[d]&T^*_i\ar[r]\ar[u]^{\chi_1} &\left(
  T_j^{\alpha_i(T_j)}\amalg 
  V_i\right) \amalg T_k^{\alpha_i(T_k)} \ar[u]\ar[r]^-{\phi_2}
&T_i\ar@{=}[d]&\eqref{exTi1} \\
T_i[-1]\ar[r]&X\ar[u]\ar[r] &\left( T_j^{\alpha_i(T_j)}\amalg V_i\right)\amalg
\left( T_j^{\alpha_k(T_j)}\amalg
  V_k\right)^{\alpha_i(T_k)}\ar[u]^-{\phi'_1}\ar[r]^-{\phi}&T_i& \\
&\left( T^*_k\right)^{\alpha_i(T_k)}\ar[u] \ar@{=}[r] &\left(
  T^*_k\right)^{\alpha_i(T_k)}\ar[u]&&
}
\]
We now want to show that the map $\phi=\phi_2 \phi'_1$ is a (not
necessarily minimal) right $\add \widetilde{T}$-approximation. Any map $f:
T_t\to T_i$ where $t\neq i$ will factor through $\phi_2$ since this is a right
$\add (T/T_i)$-approximation, so $f=\phi_2 f_1$. But since $\phi'_1$ is a right
$\add(\ol{T})$-approximation, $f_1$ factors through $\phi'_1$. Thus $f$
factors through $\phi_2 \phi'_1=\phi$.

Suppose instead that we have a map $f:T^*_k\to T_i$. Let $h:T^*_k\to
T_j^{\alpha_k(T_j)}\amalg V_k$ be the minimal left $\add
\ol{T}$-approximation for $T^*_k$. Then $f=gh$ for some map
$g:T_j^{\alpha_k(T_j)}\amalg V_k \to
T_i$. Since $T_i$ is not a summand of $V_k$, we have that
$T_j^{\alpha_k(T_j)}\amalg V_k$ is in $\add (\widetilde{T}/T^*_k)$, and by the
above, $g$ factors through $\phi$. We conclude that $\phi$ is a right $\add
\widetilde{T}$-approximation.

Similarly we now construct a second commutative diagram. We use the octahedral
axiom on the composition of the map $\phi_4$ in \eqref{exTi2}
and the map $\chi_1$ in the triangle
\begin{equation}
\left( T^*_k\right)^{\alpha_i(T_k)} \longrightarrow X \longrightarrow T_i^*
\overset{\chi_1}{\longrightarrow} \left( T^*_k\right)^{\alpha_i(T_k)}[1]
\end{equation}
from the second column of the previous diagram. (Note that by our assumption,
$T_k$ does not appear in \eqref{exTi2}.)
\[
\xymatrix{
&\left( T^*_k\right)^{\alpha_i(T_k)}[1]\ar@{=}[r]&\left(
  T^*_k\right)^{\alpha_i(T_k)}[1]& \\
T_i\ar@{=}[d]\ar[r] &T_j^{\alpha'_i(T_j)}\amalg V'_i\ar[u]^{\chi}\ar[r]^{\phi_4}
&T^*_i\ar[u]^{\chi_1}\ar[r] &T_i[1]\ar@{=}[d] \\
T_i\ar[r]&Y\ar[u]\ar[r]^{\psi}&X\ar[u]\ar[r]^{\psi_1}&T_i[1] \\
&\left( T^*_k\right)^{\alpha_i(T_k)}\ar[u]\ar@{=}[r] &\left(
  T^*_k\right)^{\alpha_i(T_k)}\ar[u]& 
}
\]
We notice that in this second diagram, the map $\chi$ must be zero, since
$\Ext^1_{\C}(T_j,T^*_k)=\Ext^1_{\C}(V_i',T^*_k)=0$. Therefore, the triangle
splits, and
\[
Y\simeq \left( T_j^{\alpha'_i(T_j)}\amalg V'_i\right) \amalg \left(
  T^*_k\right)^{\alpha_i(T_k)}
\]
We see that $\psi$ in the diagram is a (not necessarily minimal) right $\add
\widetilde{T}$-approximation as well: For any map $f:U\to X$ where $U\in
\add \widetilde{T}$, the composition $\psi_1f$ is zero since
$\Ext^1_{\C}(U,T_i)=0$, which again implies that $f$ factors through $\psi$.

Since $\phi$ is a right $\add \widetilde{T}$-approximation, $X=Z\amalg T^!_i$,
where $T^!_i$ is the second complement of $\widetilde{T}$. Consider the
triangles we have constructed:
\begin{equation} \label{redundant1}
Z\amalg T^!_i
\longrightarrow
\left( T_j^{\alpha_i(T_j)}\amalg V_i\right)\amalg \left(
  T_j^{\alpha_k(T_j)}\amalg V_k\right)^{\alpha_i(T_k)}
\overset{\phi}{\longrightarrow}
T_i
\longrightarrow
\end{equation}
and
\begin{equation} \label{redundant2}
T_i
\longrightarrow
\left( T_j^{\alpha'_i(T_j)}\amalg V'_i\right) \amalg \left(
  T^*_k\right)^{\alpha_i(T_k)}
\overset{\psi}{\longrightarrow}
Z\amalg T^!_i
\longrightarrow
\end{equation}
Since $\phi$ and $\psi$ are right $\add(\widetilde{T})$-approximations, we see
that an automorphism of $Z$ splits off in both
triangles, and the remaining parts are the exchange triangles for $T_i$ and
$T^!_i$ with respect to $\widetilde{T}$. So to find the entry $b^*_{ij}$ we
calculate the difference of the multiplicities of $T_j$ in the two triangles
\eqref{redundant1} and \eqref{redundant2}. So if we denote by
$\alpha_{\eqref{redundant1}}(T_j)$ the multiplicity of $T_j$ in the middle term
of triangle \eqref{redundant1} and similarly for triangle \eqref{redundant2}
we get
\begin{eqnarray*}
b^*_{ij} & = & \alpha_{\eqref{redundant2}}(T_j)-\alpha_{\eqref{redundant1}}(T_j) \\
& = & \alpha'_i(T_j)-\left( \alpha_i(T_j)+\alpha_k(T_j)\alpha_i(T_k)\right) \\
& = & \left( \alpha'_i(T_j)-\alpha_i(T_j)\right)-\alpha_k(T_j)\alpha_i(T_k) \\
& = & \left\{ \begin{array}{ll} b_{ij} & \textrm{ when
    }\alpha_k(T_j)=0\textrm{, i.e.\ when }b_{kj}\geq 0 \\
    b_{ij}-(-b_{kj})(-b_{ik})=b_{ij}-b_{kj}b_{ik} & \textrm{ when
    }\alpha_k(T_j)>0\textrm{, i.e.\ when }b_{kj}<0 \end{array}\right.
\end{eqnarray*}
Also, it is clear from \eqref{redundant1} and \eqref{redundant2} that
$b^*_{ik}=\alpha_i(T_k)=-b_{ik}$. Summarising, we see that the entries in the
$i$th row change as required by the FZ rule in this case.

\textbf{Case III:} Finally we consider the case where $b_{ik}>0$. This means
that $T_k$ appears in \eqref{exTi2}, but not in \eqref{exTi1}. Furthermore,
by Lemma \ref{signskewsymm}, $T_i$ appears in \eqref{exTk1}, but not in
\eqref{exTk2}. The argument follows the same lines as in Case II. Instead of
\eqref{bigtriangle1}, we use the following triangle:
\begin{equation} \label{bigtriangle2}
\left( T^*_k\right) ^{\alpha'_i(T_k)}[-1]\longrightarrow
\begin{array}{c}
\left( T_j^{\alpha'_i(T_j)}\amalg V'_i\right) \\
\amalg \\
T_k^{\alpha'_i(T_k)}
\end{array}
\overset{\phi'_2}{\longrightarrow}
\begin{array}{c}
\left( T_j^{\alpha'_i(T_j)}\amalg V'_i\right) \\
\amalg \\
\left( T_j^{\alpha'_k(T_j)}\amalg V'_k\right)^{\alpha'_i(T_k)}
\end{array}
\longrightarrow
\left( T^*_k\right)^{\alpha'_i(T_k)}
\end{equation}
which is the direct sum of $\alpha'_i(T_k)$ copies of \eqref{exTk2} and the
identity map of $T_j^{\alpha'_i(T_j)}\amalg V'_i$. By the octahedral axiom,
applied to the composition of the map $\phi_3$ in \eqref{exTi2} and the map
$\phi'_2$ in \eqref{bigtriangle2}, we get this commutative diagram:
\[
\xymatrix{
&\left( T^*_k\right)^{\alpha'_i(T_k)}\ar@{=}[r] &\left(
  T^*_k\right)^{\alpha'_i(T_k)}& \\
T_i\ar[r]^-{\phi'} &\left( T_j^{\alpha'_i(T_j)}\amalg V'_i\right) \amalg \left(
  T_j^{\alpha'_k(T_j)}\amalg V'_k\right)^{\alpha'_i(T_k)}\ar[u]\ar[r] &
X\ar[u]\ar[r]&T_i[1] \\
T_i\ar@{=}[u]\ar[r]^-{\phi_3} &\left( T_j^{\alpha'_i(T_j)}\amalg V'_i\right)\amalg
T_k^{\alpha'_i(T_k)}\ar[u]^{\phi'_2}\ar[r]
&T^*_i\ar[u]\ar[r] &T_i[1]\ar@{=}[u] \\
&\left( T^*_k\right) ^{\alpha'_i(T_k)}[-1]\ar[u]\ar@{=}[r] &\left( T^*_k\right)
^{\alpha'_i(T_k)}[-1]\ar[u]& 
}
\]
By arguments dual to those in Case II, we can check that $\phi'$ is a
left (not necessarily minimal) $\add \widetilde{T}$-approximation. Details are
left to the reader.

Now the octahedral axiom applied to the composition in the left square below
gives a new diagram and a new object $Y$, where the second column is the same
triangle as the third column in the previous diagram:
\[
\xymatrix{
&\left( T^*_k\right)^{\alpha'_i(T_k)}\ar@{=}[r] &\left(
  T^*_k\right)^{\alpha'_i(T_k)} & \\
T_i[-1]\ar[r] & X\ar[u]\ar[r]^{\psi'} & Y\ar[u]\ar[r] & T_i \\
T_i[-1]\ar@{=}[u]\ar[r] & T^*_i\ar[u]\ar[r] & T_j^{\alpha_i(T_j)}\amalg
V_i\ar[u]\ar[r] & T_i\ar@{=}[u] \\
&\left( T^*_k\right) ^{\alpha'_i(T_k)}[-1]\ar[u]\ar@{=}[r]&\left( T^*_k\right)
^{\alpha'_i(T_k)}[-1]\ar[u]^{\chi'}&
}
\]
Once again, by the fact that $T_k$ is not a summand of $V_i$, and the
vanishing of $\Ext^1_{\C}$-groups, $\chi'$ in this diagram is zero, the triangle
splits, and
\[
Y\simeq \left( T^*_k\right)^{\alpha'_i(T_k)}\amalg T_j^{\alpha_i(T_j)}\amalg V_i
\]
and we may also conclude that $\psi'$ is a left $\add
\widetilde{T}$-approximation as in the previous case.

As in Case II, we can find the entry $b^*_{ij}$ by subtracting the
multiplicity of $T_j$ in the triangle involving $\psi'$ from its multiplicity
in the triangle involving $\phi'$:
\begin{eqnarray*}
b^*_{ij} & = & \left( \alpha'_i(T_j)+\alpha'_k(T_j)\alpha'_i(T_k)\right) -
\alpha_i(T_j) \\
& = & \left( \alpha'_i(T_j)-\alpha_i(T_j)\right) +
\alpha'_k(T_j)\alpha'_i(T_k) \\
& = & \left\{
\begin{array}{ll}
b_{ij} & \textrm{when }\alpha'_k(T_j)=0\textrm{, i.e.\ when }b_{kj}\leq 0 \\
b_{ij}+b_{kj}b_{ik} & \textrm{when }\alpha'_k(T_j)>0\textrm{, i.e.\
  when }b_{kj}>0
\end{array}
\right.
\end{eqnarray*}
Also, arguing in a similar way to Case II, we obtain
$b^*_{ik}=-\alpha'_i(T_k)=-b_{ik}$. We have shown that $b^*_{ij}$ is obtained
from $b_{ij}$ using the FZ mutation rule, so the proof is complete.
\end{proof}

\section{Maximal rigid objects in cluster categories of
  tubes} \label{section:maxrigidintubes}
In this section we will give a complete description of the maximal rigid
objects in the cluster category of a tube, as defined in \cite{bmrrt}. It
turns out that none of these are cluster-tilting objects. In Section
\ref{section:relationshiptypeB}, we will apply the main result in Section
\ref{section:clusterstructures} to show that this category provides a model
for the combinatorics of a type $B$ cluster algebra.

We will denote by $\T_n$ the \emph{tube of rank $n$}. One realization of this
category is as the category of nilpotent representations of a quiver with
underlying graph $\tilde{A}_{n-1}$ and cyclic orientation. We will write just
$\T$ for this category if the actual value of $n$ is not important. The
category $\T$ is a Hom-finite hereditary abelian category, and we can
therefore apply the definition from \cite{bmrrt} to form its cluster category.

The AR-quiver of the bounded derived category $\D^b(\T)$ of $\T$ is a
countable collection of copies of the tube, one for each shift. See Figure
\ref{fig:derivedtube}. The only maps in $\D^b(\T)$ which are not visible as
a composition of finitely many maps in the AR-quiver are the maps from each
$\T[i]$ to $\T[i+1]$ which correspond to the extensions in $\T$.
\begin{figure}
\centering
\includegraphics[width=9cm]{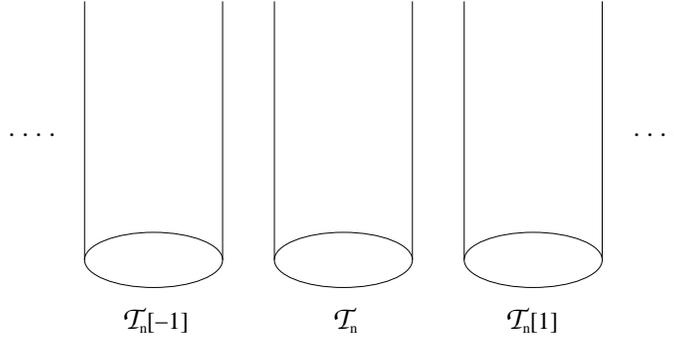}
\caption{The AR-quiver of the derived category of $\T_n$; a countable set of
  disconnected tubes. There exist maps from indecomposables in each copy to
  the next copy on the right, corresponding to extensions in the tube
  itself.} \label{fig:derivedtube}
\end{figure}

The cluster category is now defined as the orbit category
$\C_{\T_n}=\D^b(\T_n)/\tau^{-1} [1]$ where $\tau$ is the AR translation and
$[1]$ is the shift functor. Again, we will sometimes write just $\C_{\T}$.
There is a 1-1 correspondence between
the indecomposable objects in $\C_{\T_n}$ and those of $\T_n$, since $\ind \T_n$
is itself a fundamental domain for the action of $\tau^{-1}[1]$. We will
denote both an object in $\T_n$ and its orbit as an object in $\C_{\T_n}$ by the
same symbol, and we will sometimes refer to the category $\C_{\T_n}$ as a
\emph{cluster tube}.

Since $\T$ does not have tilting objects, it does not follow directly from
Keller's theorem \cite{k} that $\C_{\T}$ is triangulated. However, $\C_{\T}$
can be shown to be a thick subcategory of $\C_{H}$, the cluster category of a
suitable tame hereditary algebra $H$, or, as in \cite{bkl}, a subcategory of the
category of sheaves over a weighted projective line. It follows that $\C_{\T}$
is triangulated, and that the canonical functor $\D^b(\T_n)\to C_{\T_n}$ is a
triangle functor.

We will use a coordinate system on the indecomposable objects. We will
let $(a,b)$ be the unique object with socle $(a,1)$ and quasi-length
$b$, where the simples are arranged such that $\tau
(a,1)=(a-1,1)$ for $1\leq a\leq n$. Throughout, when we write equations and
inequalities which involve first coordinates outside the domain $1,...,n$, we
will implicitly assume identification modulo $n$. See Figure
\ref{fig:coordinates}.
\begin{figure}
\centering
\includegraphics[width=11cm]{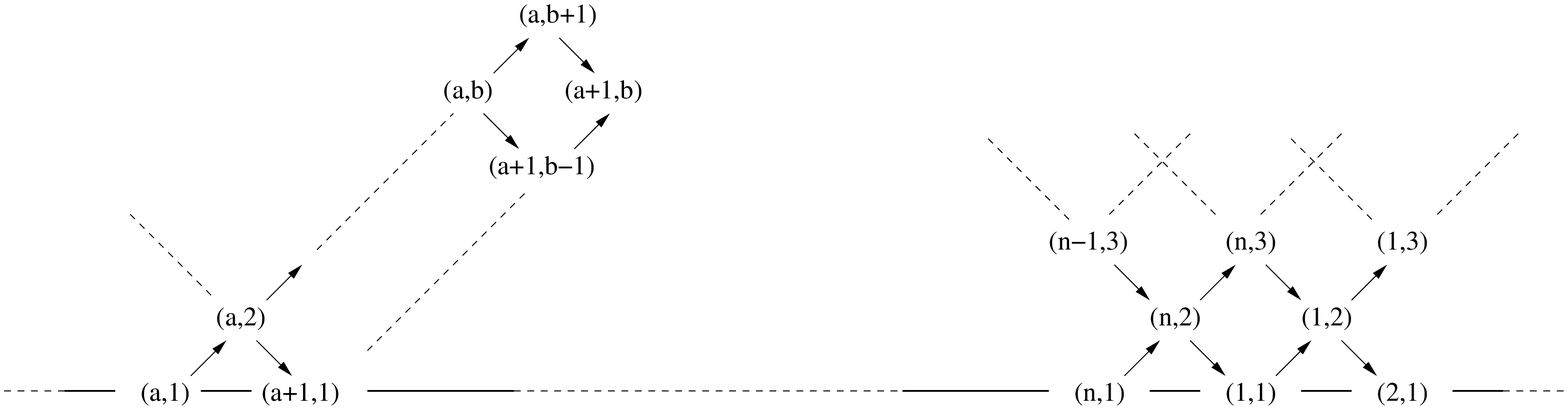}
\caption{Coordinate system for the indecomposable objects in the
  tube.} \label{fig:coordinates}
\end{figure}

\begin{lem} \label{lem:Homspaces}
If $X$ and $Y$ are indecomposables in $\T$, we have
\[
\Hom_{\C_{\T}}(X,Y) \simeq D\Hom_{\T}(Y,\tau^2 X) \amalg \Hom_{\T}(X,Y)
\]
where $D$ denotes the $k$-vector space duality $\Hom_k(-,k)$.
\end{lem}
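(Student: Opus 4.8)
The plan is to use the definition of the cluster category $\C_\T = \D^b(\T)/F$ where $F = \tau^{-1}[1]$, and the standard formula for Hom-spaces in an orbit category: for a triangle functor $F$ acting on a triangulated category $\D$, one has $\Hom_{\D/F}(X,Y) \simeq \bigoplus_{i\in\ZZZ} \Hom_{\D}(X, F^i Y)$, at least when this direct sum is finite (which it will be here because $\T$ is $\Hom$-finite and, as the figure makes clear, only finitely many shifts of $Y$ admit nonzero maps from $X$). So first I would write $\Hom_{\C_\T}(X,Y) \simeq \bigoplus_{i\in\ZZZ} \Hom_{\D^b(\T)}(X, \tau^{-i}Y[i])$.

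Next I would analyze which summands on the right are nonzero. Since $\T$ is hereditary, $\Hom_{\D^b(\T)}(X, Z[i])$ is zero unless $i \in \{0,1\}$ (it equals $\Hom_\T(X,Z)$ for $i=0$ and $\Ext^1_\T(X,Z)$ for $i=1$). So the only contributions come from $i=0$, giving $\Hom_\T(X,Y)$, and from $i=1$, giving $\Ext^1_\T(X,\tau^{-1}Y)$. Then I would invoke the Auslander-Reiten / Serre duality formula in the hereditary abelian category $\T$: $\Ext^1_\T(X,\tau^{-1}Y) \simeq D\Hom_\T(\tau^{-1}Y, \tau X) \simeq D\Hom_\T(Y, \tau^2 X)$, where the last step applies $\tau$ (an autoequivalence of $\T$) to both arguments of the Hom. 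Assembling these two pieces yields the claimed isomorphism $\Hom_{\C_\T}(X,Y) \simeq D\Hom_\T(Y,\tau^2 X) \amalg \Hom_\T(X,Y)$.

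The main technical obstacle is justifying the orbit-category Hom formula in this setting. The subtlety, already flagged in the excerpt, is that $\C_\T$ is not \emph{a priori} known to be triangulated via Keller's theorem (since $\T$ has no tilting object); the paper circumvents this by realizing $\C_\T$ as a thick subcategory of $\C_H$ for a tame hereditary $H$, or inside a category of sheaves on a weighted projective line. So I would either (i) deduce the Hom-formula directly from the orbit-category construction at the level of the quotient of the underlying $k$-linear category, checking that the relevant infinite sum is actually finite and that no completion issues arise, or (ii) transport the computation into $\C_H$, where the formula $\Hom_{\C_H}(X,Y)\simeq \Hom_{\D^b(H)}(X,Y) \oplus \Hom_{\D^b(H)}(X, FY)$ is standard from \cite{bmrrt}, and restrict to the thick subcategory $\C_\T$. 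Option (ii) is cleanest: since $\C_\T \subseteq \C_H$ is a full subcategory, the Hom-spaces are computed in $\C_H$, and the extension/Serre-duality terms in $\D^b(H)$ restrict to the corresponding terms for $\T$ because $\T$ is closed under extensions and $\tau$ in the ambient category.

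One should also double-check the bookkeeping of the two copies of the tube in $\D^b(\T)$: the figure indicates that the "invisible" maps from $\T[i]$ to $\T[i+1]$ are exactly the extensions, which is precisely what makes the $i=1$ term equal $\Ext^1_\T$ rather than something larger, and confirms there are no contributions from $|i|\geq 2$. With that verified, the identification is routine, so I do not expect any further difficulty beyond the foundational point above.
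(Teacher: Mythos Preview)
Your approach is essentially the same as the paper's: both use the orbit-category Hom formula, observe that heredity of $\T$ kills all but two summands, and then apply Auslander--Reiten duality. The only cosmetic differences are the indexing (the paper sums over $\Hom_{\D^b(\T)}(\tau^{-i}X[i],Y)$ and keeps $i=-1,0$, whereas you sum over $\Hom_{\D^b(\T)}(X,F^iY)$ and keep $i=0,1$) and the point at which AR duality is invoked (the paper applies it directly to $\Ext^1_\T(\tau X,Y)$, you to $\Ext^1_\T(X,\tau^{-1}Y)$ followed by $\tau$-invariance). Your extended discussion of foundational issues is more cautious than necessary: the Hom formula $\Hom_{\D/F}(X,Y)=\bigoplus_i\Hom_\D(X,F^iY)$ is the \emph{definition} of morphisms in the orbit category as a $k$-linear category, so it holds regardless of whether $\C_\T$ is triangulated; the triangulated structure is only needed elsewhere in the paper.
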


\begin{proof}
By the definition of orbit categories,
\[
\Hom_{\C_{\T}}(X,Y) = \coprod_{i\in \mathbb{Z}}\Hom_{\D^b(\T)}(\tau^{-i}X[i],Y)
\]
Since $\T$ is hereditary, the only possible contribution can be for
$i=-1,0$:
\begin{eqnarray*}
\Hom_{\C_{\T}}(X,Y) & = &  \Hom_{\D^b(\T)}(\tau X[-1],Y) \amalg
\Hom_{\D^b(\T)}(X,Y) \\
& \simeq & \Hom_{\D^b(\T)}(\tau X,Y[1]) \amalg \Hom_{\D^b(\T)}(X,Y) \\
& = & \Ext^1_{\T}(\tau X,Y) \amalg \Hom_{\T}(X,Y) \\
& \simeq & D\Hom_{\T}(Y,\tau^2 X) \amalg \Hom_{\T}(X,Y)
\end{eqnarray*}
\end{proof}
\begin{figure}
\centering
\includegraphics[width=10cm]{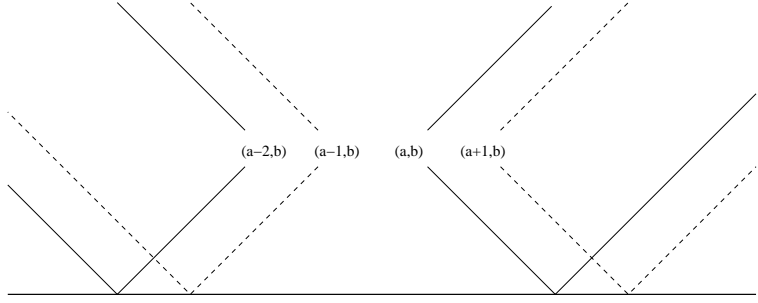}
\caption{For an indecomposable $X=(a,b)$, the Hom-hammock is illustrated by
  the full lines. Shifting it one to the right, we get the Ext-hammock. The
  backwards and forwards hammocks will overlap, depending on the rank of the
  tube.} \label{fig:hammocks}
\end{figure}

The Hom- and Ext-hammocks of an indecomposable object $X$ (that is, the
supports of $\Hom_{\C_{\T}}(X,-)$ and $\Ext^1_{\C_{\T}}(X,-)$) are illustrated
in Figure \ref{fig:hammocks}. For two indecomposables $X$ and $Y$ in
$\C_{\T}$, the maps in $\Hom_{\C_{\T}}(X,Y)$ which are images of maps in $\T$
will be called $\T$-maps, while those that come from maps $\T \to \T[1]$ in
the derived category will be called $\D$-maps.

The following lemma is necessary for understanding endomorphism rings of
objects in $\C_{\T}$.

\begin{lem} \label{lem:Dfactoring}
Let $X$ and $Y$ be two indecomposable objects in $\C_{\T}$. A
$\D$-map in $\Hom_{\C_{\T}}(X,Y)$ factors through the ray starting in $X$ and
the coray in ending in $Y$.
\end{lem}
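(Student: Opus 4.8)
The plan is to transport the statement into the hereditary abelian category $\T$ by means of Lemma \ref{lem:Homspaces}, and then to prove it from only two facts: $\T$ has global dimension $\leq 1$, and maps along a ray of the tube are monomorphisms while maps along a coray are epimorphisms.

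First I would reinterpret a $\D$-map and its composites. As in the proof of Lemma \ref{lem:Homspaces}, only degrees $0$ and $-1$ contribute to the morphisms of $\C_{\T}$, and the $\D$-maps $X\to Y$ are exactly the image of the degree $-1$ summand $\Ext^1_{\T}(\tau X,Y)\cong D\Hom_{\T}(Y,\tau^{2}X)$; thus a nonzero $\D$-map $f\colon X\to Y$ is given by a nonzero class $\phi\in\Ext^1_{\T}(\tau X,Y)$. The same degree count yields the composition rules I need: for a $\T$-map $u\colon X\to R$ and a $\D$-map $f'\colon R\to Y$ with class $\phi'$, the composite $f'u$ is the $\D$-map with class $(\tau u)^{*}\phi'$, the pull-back of $\phi'$ along $\tau u$; dually, for a $\D$-map $f''\colon X\to C$ with class $\phi''$ and a $\T$-map $w\colon C\to Y$, the composite $wf''$ is the $\D$-map with class $w_{*}\phi''$, the push-out of $\phi''$ along $w$. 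In both cases the degree $0$ part of the composite vanishes, so it is again a pure $\D$-map.

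The key point is then immediate from heredity. By the coordinate picture (Figure \ref{fig:coordinates}), a map $u\colon X\to R$ along the ray starting in $X$ is a monomorphism, hence so is $\tau u\colon\tau X\to\tau R$; letting $Q$ be its cokernel, $\Ext^{2}_{\T}(Q,Y)=0$, so the long exact sequence obtained from $0\to\tau X\to\tau R\to Q\to 0$ shows that $(\tau u)^{*}\colon\Ext^1_{\T}(\tau R,Y)\to\Ext^1_{\T}(\tau X,Y)$ is surjective. Choosing $\phi'$ with $(\tau u)^{*}\phi'=\phi$, which is forced to be nonzero, gives a $\D$-map $f'\colon R\to Y$ with $f=f'u$, so $f$ factors through any prescribed object $R$ of the ray starting in $X$. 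Dually, a map $w\colon C\to Y$ along the coray ending in $Y$ is an epimorphism, $\Ext^{2}_{\T}(\tau X,\ker w)=0$, and $w_{*}\colon\Ext^1_{\T}(\tau X,C)\to\Ext^1_{\T}(\tau X,Y)$ is surjective, so $f$ also factors through any prescribed object $C$ of the coray ending in $Y$. Carrying out both steps in turn writes $f$ as a composite $X\to R\to C\to Y$ with $R$ on the ray starting in $X$, $C$ on the coray ending in $Y$, and the middle map again a $\D$-map; letting $R$ and $C$ run arbitrarily far out along the ray and coray moreover shows that $f$ lies in the infinite radical. If one wishes to single out optimal $R$ and $C$, one invokes uniseriality of the indecomposables of $\T$: pick a homomorphism $g\colon Y\to\tau^{2}X$ on which the functional $\phi$ does not vanish, factor $g$ through its image $\operatorname{im}g$, and read off from $\operatorname{im}g$ the socle and head that $R$ and $C$ must carry.

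The step that needs care is the cyclic bookkeeping. First coordinates are defined only modulo $n$, and, as Figure \ref{fig:hammocks} illustrates, the Hom- and Ext-hammocks of $X$ overlap once the rank of the tube is small relative to the quasi-lengths occurring; so whenever one wants to name a definite factorisation object one must check that its quasi-length lies in the range making it literally an object of the ray starting in $X$ (respectively the coray ending in $Y$), and in the overlapping regime replace it by the next indecomposable with the same socle (respectively head) further along. None of this affects the surjectivity arguments above, so it bears only on the sharpest form of the statement, not on the existence of the factorisation.
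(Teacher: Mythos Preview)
Your proof is correct and takes a genuinely different route from the paper's. The paper works entirely in $\D^{b}(\T)$: it views a $\D$-map as a morphism $f\colon X\to\tau^{-1}Y[1]$, which is never an isomorphism, and then argues by induction on the quasi-length of $X$, using the almost split triangle at $X$ and the mesh relation to show that $f$ factors through the first irreducible map on the ray; iterating gives the full statement, and the coray half is dual. You instead identify $\D$-maps with classes in $\Ext^{1}_{\T}(\tau X,Y)$, verify the composition rules with $\T$-maps translate into pull-back along $\tau u$ and push-forward along $w$, and then obtain the factorisation in a single stroke from the surjectivity of these maps on $\Ext^{1}$, which follows from $\Ext^{2}_{\T}=0$ together with the fact that ray maps are monomorphisms and coray maps are epimorphisms. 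This is slicker and makes explicit that the lemma rests on nothing beyond heredity and the mono/epi structure of rays and corays; it also reaches an arbitrary object on the ray or coray immediately, without induction. The paper's argument, by contrast, stays within Auslander--Reiten theory and avoids tracking how compositions of $\T$- and $\D$-maps act on $\Ext$-classes. Your closing remarks on combining the two factorisations, on optimal $R$ and $C$, and on the cyclic overlap go beyond what the lemma asserts or what its only application (in the proof of Proposition~\ref{prop:tubeclusterstructure}) requires, but they are correct and harmless.
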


\begin{proof}
Let $X$ and $Y$ be objects in $\T$ which correspond to $X$ and $Y$ in
$\C_{T}$. Let $\widetilde{Y}=\tau^{-1}Y[1]$ in $\D^b(\T)$. We will prove that
a map $f\in \Hom_{\D^b(\T)}(X,\widetilde{Y})$ factors through the ray starting
in $X$ in the AR-quiver of $\D^b(\T)$. This will imply that the image of $f$
in $\C_{\T}$ factors through the ray starting in $X$ in the AR-quiver
of $\C_{\T}$.

Since $f:X\to \widetilde{Y}$ is not an isomorphism in $\D^b(\T)$ for any
choice of $X$ and $Y$, it is enough to show that $f$ factors through the
irreducible map $g$ which forms the start of the ray, and we can do this by
induction on the quasi-length of $X$. If $\ql X=1$, then $g$ is a left almost
split map in $\D^b(\T)$, and the claim holds.

Suppose now that $\ql X\geq 2$. There is an almost split triangle in $\D^b(\T)$
\[
\xymatrix@R0.3pc{
&&Z\ar[drr]^h&&& \\
X\ar[urr]^g\ar[drr]_{g'}&&\amalg&&\tau^{-1}X\ar[r]& \\
&&Z'\ar[urr]_{h'}&&& \\
}
\]
Since $f$ is not an isomorphism, $f$ factors through the left almost split map
$g\amalg g'$. We have that $\ql{Z'}=\ql{X}-1$, so by induction we can assume
that any map in $\Hom_{\D^b(\T)}(Z',\widetilde{Y})$ factors through $h'$. By
the mesh relation, $h'\circ g'$ factors through $g$. Thus $f$ factors through
$g$.

The other assertion is proved dually.
\end{proof}

From now on, suppose $T$ is a maximal rigid object in $\C_{\T_n}$. We will
give a complete description of the possible configurations of indecomposable
summands of $T$. As a consequence, we will find that there are no
cluster-tilting objects in $\C_{\T}$.

\begin{lem}
All summands $T'$ of $T$ must have $\ql{T'}\leq n-1$.
\end{lem}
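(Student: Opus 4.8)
The plan is to reduce the claim to a single non-vanishing statement about $\Hom$-spaces inside the tube $\T$ itself. Since a direct summand of a rigid object is again rigid, it suffices to show that every indecomposable object $X$ of $\C_{\T_n}$ with $\ql X \ge n$ satisfies $\Ext^1_{\C_\T}(X,X)\neq 0$; the lemma then follows at once, because every summand $T'$ of $T$ is rigid.

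First I would translate self-extensions in $\C_\T$ into $\Hom$-spaces in $\T$. From the definition $\C_{\T_n}=\D^b(\T_n)/\tau^{-1}[1]$ we have $[1]\simeq\tau$ in $\C_\T$, so for $X=(a,b)$ the object $X[1]$ is $\tau X=(a-1,b)$. Hence $\Ext^1_{\C_\T}(X,X)=\Hom_{\C_\T}(X,\tau X)$, and Lemma~\ref{lem:Homspaces}, applied to the pair $X,\tau X$ together with the fact that $\tau$ is an autoequivalence of $\T$, gives
\[
\Ext^1_{\C_\T}(X,X)\;\simeq\;D\Hom_\T(\tau X,\tau^2 X)\amalg\Hom_\T(X,\tau X)\;\simeq\;D\Hom_\T(X,\tau X)\amalg\Hom_\T(X,\tau X).
\]
So $X$ is rigid in $\C_\T$ if and only if $\Hom_\T(X,\tau X)=0$.

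The main work is then the combinatorial statement: for $X=(a,b)$ one has $\Hom_\T\bigl((a,b),(a-1,b)\bigr)\neq 0$ precisely when $b\ge n$. I would prove this using the standard description of maps in a tube: a nonzero morphism between indecomposables factors as a surjection onto a quotient of the source followed by an inclusion into the target. The quotients of $(a,b)$ are the objects $(a+j,b-j)$ for $0\le j\le b$, and the nonzero submodules of $(a-1,b)$ are the $(a-1,\ell)$ for $1\le\ell\le b$. A common term exists iff some $j$ with $0\le j\le b-1$ satisfies $a+j\equiv a-1\pmod n$, i.e.\ $j\equiv n-1\pmod n$, and such a $j$ lies in the admissible range exactly when $b\ge n$; in that case $j=n-1$ produces the nonzero composite $(a,b)\twoheadrightarrow(a-1,b-n+1)\hookrightarrow(a-1,b)$. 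Equivalently one can read this off the Hom-hammock in Figure~\ref{fig:hammocks}: $\tau X$ has the same quasi-length as $X$ but its socle is shifted one step backwards, so it enters the Hom-hammock of $X$ exactly once that hammock has wrapped around the tube, which happens iff $\ql X\ge n$. Combining this with the previous paragraph, any indecomposable of quasi-length $\ge n$ fails to be rigid in $\C_\T$, so no summand of $T$ can have quasi-length $\ge n$. The only genuinely nontrivial point is the submodule/hammock bookkeeping in this last step; everything else is a direct application of the definitions and of Lemma~\ref{lem:Homspaces}.
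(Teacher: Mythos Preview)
Your argument is correct and follows exactly the same route as the paper: both reduce to the fact that an indecomposable $X$ with $\ql X\ge n$ satisfies $\Ext^1_{\C_\T}(X,X)\neq 0$. The paper states this in one line without justification, whereas you supply the details via Lemma~\ref{lem:Homspaces} and the explicit submodule/quotient bookkeeping showing $\Hom_\T((a,b),(a-1,b))\neq 0$ iff $b\ge n$; so your proof is a fleshed-out version of the paper's, not a different approach.
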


\begin{proof}
If $\ql{T'}\geq n$, then $\Ext_{\C_{\T_n}}^1(T',T')\neq 0$, so $T'$ is not a
summand of a rigid object.
\end{proof}

In what follows, for any indecomposable $X=(a,b)$ in $\C_{\T}$, the \emph{wing
  determined by $X$} will mean the set of indecomposables whose position in
the AR-quiver is in the triangle which has $X$ on top, that is, $(a',b')$ such
that $a'\geq a$ and $a'+b'\leq a+b$. We will use the notation $\W_X$ for this.

\begin{lem} \label{lem:wing}
If $T_0$ is an indecomposable summand of $T$ of maximal quasi-length, then all
the summands of $T$ are in the wing determined by $T_0$.
\end{lem}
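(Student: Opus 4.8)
The plan is to argue by contradiction: suppose $T$ has an indecomposable summand $S$ that lies outside the wing $\W_{T_0}$ determined by a summand $T_0$ of maximal quasi-length, and produce a nonzero element of $\Ext^1_{\C_\T}(T,T)$. The natural tool is the description of Hom- and Ext-spaces in $\C_\T$ from Lemma \ref{lem:Homspaces} together with the hammock picture of Figure \ref{fig:hammocks}; concretely, for indecomposables $X,Y$ one has $\Ext^1_{\C_\T}(X,Y)\simeq D\Hom_{\C_\T}(Y,X)$ up to the shift bookkeeping, so rigidity of $T$ says that for any two summands $X,Y$ of $T$ there is essentially a one-directional constraint on where they can sit relative to each other. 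First I would make this precise: write $T_0=(a,b)$ with $b=\ql T_0$ maximal. The wing $\W_{T_0}$ consists of all $(a',b')$ with $a'\ge a$ and $a'+b'\le a+b$ (reading first coordinates mod $n$). I would translate "$S=(c,d)$ is not in $\W_{T_0}$" into the statement that either $S$ lies strictly to the left of the ray through $T_0$, or strictly above/to the right of the coray through $T_0$ — equivalently, either the ray of $S$ or the coray of $S$ crosses the wing of $T_0$ in a way the rigid configuration cannot tolerate.

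The key step is a local rigidity computation for the pair $\{T_0,S\}$. Since $\ql S\le \ql T_0=b$ (by Lemma \ref{lem:wing}'s predecessor, maximality) and $\ql T_0\le n-1$, the Hom-hammock of $T_0$ and the Ext-hammock of $T_0$ overlap in a controlled region, and I would show that if $S$ sits outside $\W_{T_0}$ then either $\Hom_{\C_\T}(T_0,S)$ contains a $\D$-map with $S$ in the "forbidden" zone — giving $\Ext^1_{\C_\T}(S,T_0)\ne 0$ — or, symmetrically, $\Ext^1_{\C_\T}(T_0,S)\ne 0$. Here the careful point is that Lemma \ref{lem:Dfactoring} pins down exactly how $\D$-maps behave (they factor through the ray out of the source and the coray into the target), so one can read off from the hammock geometry precisely when the relevant Ext-group is forced to be nonzero once $d\le b$ and $S\notin\W_{T_0}$. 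I expect the argument to split into the two cases "$S$ to the left of the ray" and "$S$ above the coray", which are dual under the symmetry of the tube, so only one needs full detail.

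The main obstacle will be handling the wrap-around: because first coordinates are taken mod $n$ and $\ql T_0$ can be as large as $n-1$, the hammocks are "almost" the whole tube, and "outside the wing $\W_{T_0}$" is a genuinely small region. I need to be sure that in that small region the Ext-obstruction is unavoidable and that I am not accidentally using a position that is actually inside $\W_{T_0}$ after reducing mod $n$. The cleanest way to manage this, which I would adopt, is to fix once and for all a representative interval for the first coordinate so that $T_0=(a,b)$ has $a=1$ (say), describe $\W_{T_0}$ and the two hammocks as explicit coordinate inequalities, and then simply check that the complement of $\W_{T_0}$ (intersected with $\{\ql\le b\}$) is contained in the union of the support of $\Ext^1_{\C_\T}(T_0,-)$ and the support of $\Ext^1_{\C_\T}(-,T_0)$. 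Given Lemmas \ref{lem:Homspaces} and \ref{lem:Dfactoring}, this is a finite bookkeeping check rather than anything conceptually hard, and it yields the required contradiction with $\Ext^1_{\C_\T}(T,T)=0$.
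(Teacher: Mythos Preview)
Your plan has a genuine gap: the pairwise Ext-obstruction you are hoping for simply does not exist in general. At this point in the argument we do \emph{not} yet know that $\ql T_0 = n-1$; that is the next lemma, and it uses the present one. If $\ql T_0 < n-1$ then there are indecomposables $S$ with $\ql S \le \ql T_0$ lying outside $\W_{T_0}$ which nevertheless satisfy $\Ext^1_{\C_\T}(T_0,S)=0$ (and hence, by 2-CY, also $\Ext^1_{\C_\T}(S,T_0)=0$). Concretely, with $T_0=(1,l_0)$ any $S=(s,l_1)$ with $l_0+2\le s$ and $s+l_1\le n$ is Ext-orthogonal to $T_0$ but not in the wing; for example $n=6$, $T_0=(1,3)$, $S=(5,1)$. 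So your ``finite bookkeeping check'' that the complement of $\W_{T_0}$ (intersected with $\{\ql\le l_0\}$) is contained in the Ext-support of $T_0$ will fail, and no contradiction to rigidity of the pair $\{T_0,S\}$ is available.

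The missing idea is that one must use the \emph{maximal} rigidity of $T$, not just rigidity. The paper's argument does exactly this: it picks a summand $T_1=(s,l_1)$ outside $\W_{T_0}$ that maximises $s+l_1$, uses Ext-orthogonality with $T_0$ to pin down the inequalities $l_0+2\le s$ and $s+l_1\le n$, and then observes that the object $X=(1,s+l_1-1)$ has quasi-length $\ge l_0$, has no self-extensions, and is Ext-orthogonal to every summand of $T$ (since all summands lie in $\W_X$). Maximal rigidity then forces $X\in\add T$, whence $X=T_0$ by maximality of $\ql T_0$, contradicting $T_1\notin\W_{T_0}$. In short, the lemma is not a two-object hammock computation; it hinges on producing a new summand of $T$ via maximality.
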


\begin{proof}
We assume, without loss of generality, that $T_0$ has coordinates
$T_0=(1,l_0)$. Let $T_1=(s,l_1)$ be a summand of $T$ which maximises the sum
$x+y$ with $(x,y)$ the coordinates of summands of $T$. Assume that $T_1$ is
not in $\W_{T_0}$. Since $\Ext^1_{\C_{\T}}(T_0,T_1)=0$, we have $l_0+2\leq s$ and
$s+l_1\leq n$. Therefore all summands of $T$ sit inside the wing $\W_X$ where
$X=(1,s+l_1-1)$. Furthermore, $X$ has no self-extensions, so
$\Ext^1_{\C_{\T}}(T\amalg X,T\amalg X)=0$. Hence $X$ is a summand of
$T$. Since the quasi-length of $X$ is at least the quasi-length of $T_0$, we
must have $X=T_0$, but this contradicts the fact that $T_1$ is not in
$\W_{T_0}$. We conclude that all summands are in $\W_{T_0}$. See Figure
\ref{fig:wing}.
\end{proof}

\begin{figure}
\centering
\includegraphics[width=8cm]{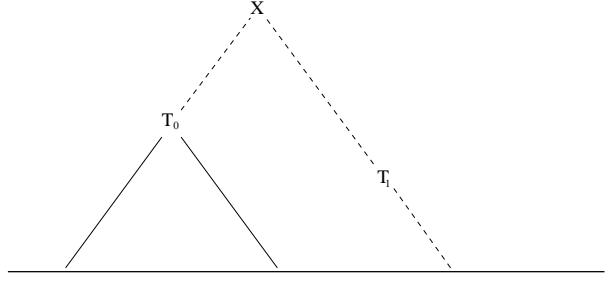}
\caption{In the proof of Lemma \ref{lem:wing}, if $T_1$ is outside the wing
  determined by $T_0$, then $T\amalg X$ is rigid. Note that $\ql X\leq n-1$,
  since $\Ext_{\C_{\T_n}}^1(T_0,T_1)=0$.} \label{fig:wing}
\end{figure}

For our maximal rigid object $T$ we will in the rest of this section denote by
$T_0$ the unique summand of maximal quasi-length, and we will sometimes call
it the top summand.

\begin{lem}
The quasi-length of $T_0$ is $n-1$.
\end{lem}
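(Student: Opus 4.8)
The plan is to combine the bound $\ql T_0\le n-1$ already proved with the reverse inequality $\ql T_0\ge n-1$. I would prove the latter by contradiction: if $T_0$ were strictly shorter than $n-1$, then the wing containing all summands of $T$ could be enlarged by one step while keeping rigidity, producing a rigid object strictly larger than $T$.

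Concretely, set $l_0=\ql T_0$ and assume $l_0\le n-2$; after a rotation we may take $T_0=(1,l_0)$. I would then consider $X=(1,l_0+1)$, the indecomposable sitting directly above $T_0$, with $\ql X=l_0+1\le n-1$. Two easy observations: first, $X$ is rigid, since $\ql X\le n-1$ forces $\Ext^1_{\C_{\T}}(X,X)=0$ (the contrapositive of the first quasi-length lemma, or read off the Ext-hammock of Figure \ref{fig:hammocks}); second, $\W_{T_0}\subseteq\W_X$ straight from the definition of the wing, so by Lemma \ref{lem:wing} every indecomposable summand of $T$ lies in $\W_X$.

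The only real content is to show that the top $X$ of a wing $\W_X$ with $\ql X\le n-1$ satisfies $\Ext^1_{\C_{\T}}(X,Y)=\Ext^1_{\C_{\T}}(Y,X)=0$ for every $Y\in\W_X$ — precisely the fact invoked in the proof of Lemma \ref{lem:wing}. Using that $[1]\cong\tau$ in $\C_{\T}$ together with Lemma \ref{lem:Homspaces}, one has $\Ext^1_{\C_{\T}}(Y,X)\cong\Hom_{\C_{\T}}(Y,\tau X)\cong D\Hom_{\T}(\tau X,\tau^2 Y)\amalg\Hom_{\T}(Y,\tau X)$, and with $X=(1,l)$, $\tau X=(n,l)$, the defining inequalities $a'\ge 1$ and $a'+b'\le l+1\le n$ for $Y=(a',b')\in\W_X$ force both Hom-spaces to vanish (a short check on the hammock support, separating the $\T$-map and $\D$-map contributions); the reverse direction then follows from the $2$-CY duality. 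Granting this, $\Ext^1_{\C_{\T}}(T\amalg X,T\amalg X)=0$.

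Finally, maximality of $T$ yields $X\in\add T$, contradicting the choice of $T_0$ as the summand of maximal quasi-length since $\ql X=l_0+1>l_0$. Hence $\ql T_0\ge n-1$, and combined with the earlier lemma, $\ql T_0=n-1$. I expect the hammock vanishing in the third paragraph to be the main (indeed only) obstacle; since the same assertion is already used in the proof of Lemma \ref{lem:wing}, it may be cleanest simply to cite it there rather than re-derive it.
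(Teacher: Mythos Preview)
Your argument is correct and follows the same idea as the paper's: assume $\ql T_0<n-1$ and contradict maximal rigidity by adjoining the top of a larger wing containing $\W_{T_0}$. The only cosmetic difference is that the paper jumps directly to $Y=(m,n-1)$ rather than your one-step enlargement $X=(m,l_0+1)$; both choices work for the same reason, namely that the top of a wing of quasi-length at most $n-1$ has vanishing $\Ext^1$ with every object in its wing.
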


\begin{proof}
Suppose $\ql{T_0}=l_0<n-1$. Then $T_0=(m,l_0)$ for some $m\in
\{1,...,n\}$. The object $Y=(m,n-1)$ will satisfy $\Ext_{\C_{\T_n}}^1(T\amalg
Y,T\amalg Y)=0$. But this contradicts the fact that $T$ is maximal rigid.
\end{proof}

With the preceeding series of lemmas at our disposal, we get the following.

\begin{prop} \label{prop:tiltedbijection}
There is a natural bijection between the set of maximal rigid objects in
$\C_{\T_n}$ and the set
\[
\left\{ \textrm{tilting modules over }\vec{A}\right\} \times
\{1,...,n\}
\]
where $\vec{A}$ is a linearly oriented quiver of Dynkin type $A_{n-1}$.
\end{prop}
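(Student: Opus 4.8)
The plan is to exploit the structural picture built up in the preceding lemmas: a maximal rigid object $T$ has a unique top summand $T_0$ of quasi-length exactly $n-1$, say $T_0=(m,n-1)$, and all other summands lie in the wing $\W_{T_0}$. The key observation is that $\W_{T_0}$, together with the irreducible maps inherited from the AR-quiver, is equivalent as a category to the module category of a linearly oriented $A_{n-1}$ quiver $\vec{A}$ — indeed a wing of height $n-1$ in a tube is precisely the AR-quiver of $\operatorname{mod}\vec{A}$. Under this equivalence, I would first check that for two indecomposables $X,Y$ lying in $\W_{T_0}$ one has $\Ext^1_{\C_{\T_n}}(X,Y)=0$ if and only if $\Ext^1_{\vec{A}}(X,Y)=0$; this follows from Lemma \ref{lem:Homspaces} together with the hammock description (Figure \ref{fig:hammocks}), since for objects inside a single wing of height $\leq n-1$ the Ext-hammock in the tube restricted to the wing coincides with the usual Ext in $\operatorname{mod}\vec A$, the "wrap-around" $\D$-contributions being killed by the quasi-length bound. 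Granting this, a rigid object of $\C_{\T_n}$ all of whose summands lie in $\W_{T_0}$ corresponds exactly to a rigid, i.e. partial tilting, module over $\vec{A}$.

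Next I would promote this to a bijection between \emph{maximal} rigid objects of $\C_{\T_n}$ and pairs (tilting module over $\vec A$, element of $\{1,\dots,n\}$). The map sends $T$ to the pair $(T/T_0$ regarded inside $\W_{T_0}\simeq\operatorname{mod}\vec A$, $\ m)$, where $m$ records the position of the top summand $T_0=(m,n-1)$ in the tube; this is the role of the factor $\{1,\dots,n\}$. One must argue that $T/T_0$ is actually a \emph{tilting} module over $\vec A$ and not merely a partial one. If $T/T_0$ were not maximal among rigid modules in $\operatorname{mod}\vec A$, there would be an indecomposable $W\in\W_{T_0}$, not a summand of $T$, with $T/T_0 \amalg W$ rigid over $\vec A$; by the Ext-comparison above $T\amalg W$ would then be rigid in $\C_{\T_n}$ (note $\Ext^1_{\C_{\T_n}}(T_0,W)=\Ext^1_{\C_{\T_n}}(W,T_0)=0$ automatically since $W\in\W_{T_0}$ and $T_0$ has no self-extensions, so the wing is entirely within the "compatible" region of $T_0$), contradicting maximality of $T$. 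Conversely, given a tilting module $U$ over $\vec A$ — which necessarily has $n-1$ indecomposable summands — and $m\in\{1,\dots,n\}$, place $U$ inside the wing $\W_{(m,n-1)}$ and set $T=(m,n-1)\amalg U$; this $T$ is rigid (by the Ext-comparison), has $n$ summands, and one checks it is maximal: any indecomposable $X$ with $\Ext^1_{\C_{\T_n}}(T\amalg X,T\amalg X)=0$ must, by Lemma \ref{lem:wing} applied to $T\amalg X$ (whose top summand is still $(m,n-1)$ since $\ql X\leq n-1$ and $(m,n-1)$ already has the maximal possible quasi-length), lie in $\W_{(m,n-1)}$, hence correspond to an element of $\operatorname{mod}\vec A$ compatible with the tilting module $U$, forcing $X\in\add U\subseteq\add T$.

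Finally I would verify that the two constructions are mutually inverse, which is essentially immediate once the Ext-dictionary is in place: starting from $T$, extracting $(T/T_0,m)$ and reassembling gives $(m,n-1)\amalg(T/T_0)=T$ since $T_0=(m,n-1)$; starting from $(U,m)$, building $T$ and then extracting its top summand recovers $m$ and $T/T_0=U$. Naturality in $n$ (and compatibility with the obvious cyclic symmetry rotating the $\{1,\dots,n\}$ factor) is then clear.

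I expect the main obstacle to be the Ext-comparison step — precisely pinning down that, inside a wing of height $n-1$ in the tube $\T_n$, the 2-Calabi–Yau $\Ext^1$ of $\C_{\T_n}$ (which by Lemma \ref{lem:Homspaces} has the extra dual summand $D\Hom_{\T}(Y,\tau^2 X)$) restricts to the ordinary hereditary $\Ext^1$ of $\operatorname{mod}\vec A$, with no spurious contributions from the $\D$-maps or from the identification modulo $n$. This is where the bound $\ql{T_0}=n-1$ is doing real work, and it must be checked carefully against the hammock picture; everything else is bookkeeping with the coordinate system.
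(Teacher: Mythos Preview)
Your approach is essentially the same as the paper's: identify the wing $\W_{T_0}$ with the AR-quiver of $\operatorname{mod}k\vec{A}$, argue that the $\Ext^1$-hammocks match up inside the wing, and record the position of the top summand as the second coordinate in $\{1,\ldots,n\}$. The paper's proof is terser but follows exactly this line.

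There is, however, a bookkeeping error in the bijection you write down. You send $T$ to the pair $(T/T_0,\,m)$ and, conversely, send a tilting module $U$ over $\vec A$ to $(m,n-1)\amalg U$, asserting that this has $n$ summands. But a maximal rigid object in $\C_{\T_n}$ has $n-1$ indecomposable summands, not $n$ (cf.\ the explicit description of $T_{\textrm{init}}$ later in the paper). The point is that the top object $T_0=(m,n-1)$ of the wing corresponds under the identification $\W_{T_0}\simeq\operatorname{mod}k\vec A$ to the indecomposable projective-injective module, which is a summand of \emph{every} tilting $k\vec A$-module. Thus the correct map sends the whole of $T$ (including $T_0$) to the tilting module, and your inverse construction $(m,n-1)\amalg U$ double-counts the projective-injective. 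With your definitions, $T/T_0$ has only $n-2$ summands and is merely a partial tilting module (an almost complete one), so the claimed bijection fails as stated, and the ``mutually inverse'' check at the end would not go through. Once you move $T_0$ back inside the tilting module rather than separating it out, everything you wrote is correct and matches the paper.
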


\begin{proof}
We have already established that all the summands of $T$ must be in the
wing determined by the summand $T_0$ of maximal quasi-length. This wing has
exactly the same shape as the AR-quiver of $k\vec{A}$. One can see easily that
for an indecomposable object in one such wing, the restriction of the
Ext-hammock to the wing exactly matches the (forwards and backwards)
Ext-hammocks of the corresponding indecomposable module in the AR-quiver of
$k\vec{A}$.

Thus the possible arrangements of pairwise orthogonal indecomposable objects
inside the wing match the possible arrangements of pairwise orthogonal
indecomposable modules in the AR-quiver of $k\vec{A}$.

Since we have $n$ choices for the top summand, we get the bijection by mapping
a maximal rigid object in the cluster tube to the pair consisting of the
corresponding tilting module over $k\vec{A}$ and the first coordinate of its
top summand.
\end{proof}

A rigid object $C$ in a triangulated 2-CY category $\C$ is called
\emph{cluster-tilting} if
$\Ext_{\C}^1(C,X)=0$ implies that $X\in \add C$. In particular, all
cluster tilting objects are maximal rigid. For cluster categories arising from
module categories of finite dimensional hereditary algebras, the opposite
implication is also true, namely that all maximal rigid objects are
cluster-tilting. The cluster tubes provide examples in which
this is not the case. 

\begin{cor} \label{noCTOs}
The category $\C_{\T}$ has no cluster-tilting objects.
\end{cor}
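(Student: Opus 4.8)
The plan is to exhibit, for any maximal rigid object $T$ in $\C_{\T_n}$, an indecomposable object $X \notin \add T$ with $\Ext^1_{\C_{\T}}(T,X)=0$, which contradicts the cluster-tilting condition. By the structural results just established (Lemmas \ref{lem:wing} and the two following, and Proposition \ref{prop:tiltedbijection}), we may assume $T$ has top summand $T_0=(1,n-1)$ and all summands of $T$ lie in the wing $\W_{T_0}$, arranged exactly as a tilting module over $k\vec{A}$ with $\vec{A}$ of type $A_{n-1}$. So the summands of $T$ are confined to a wing of "width" $n-1$, whereas the tube has rank $n$.

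First I would locate the "missing column". Since $\W_{T_0}$ only involves first coordinates $1,2,\dots,n-1$, the object $X=(n,1)$ — the simple lying in the one socle-column not touched by the wing — is a natural candidate. The key step is to check $\Ext^1_{\C_{\T}}(T,X)=0$, i.e.\ $\Ext^1_{\C_{\T}}(T',X)=0$ for every indecomposable summand $T'=(a,b)$ of $T$. Using Lemma \ref{lem:Homspaces}, $\Ext^1_{\C_{\T}}(T',X)\simeq D\Hom_{\T}(X,\tau T')\amalg\Ext^1_{\T}(T',X)$, and one reads off from the Hom/Ext-hammock picture (Figure \ref{fig:hammocks}) that neither summand can be nonzero when $T'$ lies in $\W_{T_0}=\W_{(1,n-1)}$ and $X=(n,1)$: a self-extension-free object of quasi-length $\le n-1$ sitting inside that wing is too "far" from the column $a=n$ in both the forward and backward directions for the hammocks to reach. (Concretely: $\Ext^1_{\T}(T',X)\neq 0$ would force $X$ into the Ext-hammock of $T'$, which within the rank-$n$ tube would require $T'$ to have a summand-position incompatible with $a+b\le n$; and the $\D$-part vanishes for the mirror reason.) Since also $X\notin\add T$ — every summand of $T$ lies in $\W_{T_0}$ and $(n,1)$ does not — the rigid object $T\amalg X$ properly contains $T$ with $X\notin\add T$, contradicting that $T$ is cluster-tilting.

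The main obstacle is purely bookkeeping: making the hammock argument precise for an arbitrary summand $T'=(a,b)$ of $T$, since one must handle the wrap-around identification of first coordinates modulo $n$ and verify that both the $\T$-map Ext contribution and the $\D$-map contribution vanish simultaneously. This is where a careful appeal to the explicit shape of the hammocks in Figures \ref{fig:hammocks} and \ref{fig:wing}, combined with the bound $\ql{T'}\le n-1$ and containment in $\W_{T_0}$, does all the work; no genuinely new idea is needed beyond Lemma \ref{lem:Homspaces}. Finally, since $T$ was an arbitrary maximal rigid object, no maximal rigid object is cluster-tilting, and in particular $\C_{\T}$ has no cluster-tilting objects at all.
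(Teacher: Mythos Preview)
Your proposal has a genuine error, and in fact the overall strategy cannot work as stated.

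First, the concrete mistake: with $T_0=(1,n-1)$ and $X=(n,1)$ one has $\Ext^1_{\C_{\T}}(T_0,X)\neq 0$. Indeed, $\Ext^1_{\C_{\T}}(T_0,X)\simeq \Ext^1_{\T}(T_0,X)\amalg \Hom_{\T}(\tau^{-1}T_0,X)$, and already $\Ext^1_{\T}((1,n-1),(n,1))\simeq D\Hom_{\T}((n,1),\tau(1,n-1))=D\Hom_{\T}((n,1),(n,n-1))\neq 0$, since $(n,1)$ is the socle of $(n,n-1)$. (Equivalently, in the diagonal model of Section~\ref{section:relationshiptypeB}, the diameter $\delta(1,n-1)=[1,n+1]$ is crossed by $\delta(n,1)=[n,n+2]$.) So the hammock bookkeeping you defer does not go through: the simple sitting just outside the wing lies squarely in the Ext-hammock of the top summand.

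Second, and more importantly, no choice of \emph{rigid} $X$ can ever work. If $X$ is indecomposable rigid with $\Ext^1_{\C_{\T}}(T,X)=0$, then by $2$-CY symmetry $\Ext^1_{\C_{\T}}(T\amalg X,T\amalg X)=0$, and maximal rigidity of $T$ forces $X\in\add T$. Your own sentence ``the rigid object $T\amalg X$ properly contains $T$'' should have been a red flag: it directly contradicts that $T$ is maximal rigid. The gap between ``maximal rigid'' and ``cluster-tilting'' can only be witnessed by an object with self-extensions.

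This is exactly what the paper does. With $T_0=(s,n-1)$, it takes $Y_k=(s,kn-1)$ for $k\geq 2$; these have quasi-length $\geq 2n-1$ and hence $\Ext^1_{\C_{\T}}(Y_k,Y_k)\neq 0$, so they are not in $\add T$. One then checks $\Hom_{\C_{\T}}(T,\tau Y_k)=0$ (using the preceding lemma reducing to the top summand), whence $\Ext^1_{\C_{\T}}(T,Y_k)=0$. Thus $Y_k$ violates the cluster-tilting condition for $T$ without contradicting maximal rigidity. Your argument needs to be redirected to objects of quasi-length at least $n$.
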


First a technical lemma:

\begin{lem}
Let $T$ be a maximal rigid object with top summand $T_0$, and $X$ an
indecomposable which is not in $\W_{T_0}$ and not in $\W_{\tau T_0}$. Then
$\Hom_{\C_{\T}}(T,X)=0$ if and only if $\Hom_{\C_{\T}}(T_0,X)=0$.
\end{lem}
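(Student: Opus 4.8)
The plan is to analyse the shape of the Hom-hammock of $T_0$ inside the tube and use it to control the Hom-hammock of an arbitrary summand $T'$ of $T$. One direction is trivial: since $T_0$ is a summand of $T$, if $\Hom_{\C_{\T}}(T,X)=0$ then certainly $\Hom_{\C_{\T}}(T_0,X)=0$. So the content is the converse. Suppose $\Hom_{\C_{\T}}(T_0,X)=0$; I must show $\Hom_{\C_{\T}}(T',X)=0$ for every indecomposable summand $T'$ of $T$, using that $X\notin\W_{T_0}\cup\W_{\tau T_0}$ and that $\ql T_0=n-1$. By Lemma \ref{lem:Homspaces}, $\Hom_{\C_{\T}}(T',X)$ splits as a $\T$-part $\Hom_{\T}(T',X)$ and a $\D$-part $D\Hom_{\T}(X,\tau^2 T')$, so I would treat the two types of maps separately and rule each out.

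For the $\T$-maps: all summands $T'$ of $T$ lie in $\W_{T_0}$ (Lemma \ref{lem:wing}), and since $\ql T_0=n-1$, writing $T_0=(a,n-1)$ the wing $\W_{T_0}$ together with the condition $X\notin\W_{\tau T_0}$ forces $X$ to lie outside the entire Hom-hammock (in the tube) of every object in $\W_{T_0}$: the Hom-hammock of $(a',b')\in\W_{T_0}$ is contained in that of $(a,n-1)$, and the hypothesis $\Hom_{\T}(T_0,X)=0$ (which follows from $\Hom_{\C_{\T}}(T_0,X)=0$) together with the position of $X$ relative to $\W_{\tau T_0}$ pins down exactly where $X$ sits. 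Here I would draw on Figure \ref{fig:hammocks}: in a tube of rank $n$, the forward Hom-hammock of $(a,n-1)$ is as wide as it can be without self-overlap, so its complement is precisely a wing of the form $\W_{\tau^{j} T_0}$ for the relevant $j$, and $X$ being in neither $\W_{T_0}$ nor $\W_{\tau T_0}$ places it in a region seen by no summand of $T$. The $\D$-maps are handled symmetrically via Lemma \ref{lem:Dfactoring}: a $\D$-map $T'\to X$ factors through the ray starting in $T'$ and the coray ending in $X$; the ray from $T'$ meets the ray from $T_0$ (both live in $\W_{T_0}$), so a nonzero $\D$-map from $T'$ would produce a nonzero $\D$-map from $T_0$, contradicting $\Hom_{\C_{\T}}(T_0,X)=0$. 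The Ext-hammock being the Hom-hammock shifted by one (Figure \ref{fig:hammocks}) is what translates the condition $X\notin\W_{\tau T_0}$ into the vanishing of the $\D$-part.

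The main obstacle is the careful bookkeeping with the coordinate system modulo $n$: one has to verify that the Hom-hammock of the top summand $(a,n-1)$, which in a rank-$n$ tube nearly wraps all the way around, really does have complement exactly $\W_{\tau T_0}\cup\W_{T_0}$ (up to the shift bookkeeping coming from the $\D$-maps), so that excluding $X$ from these two wings is exactly the right hypothesis. Once the geometry of the hammock of an object of quasi-length $n-1$ is pinned down, both the $\T$-map and $\D$-map cases reduce to the inclusions of hammocks $\W_{T'}\subseteq\W_{T_0}$ and the factorisation lemmas, and the statement follows.
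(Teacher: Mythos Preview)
Your overall strategy—reduce everything to a hammock-containment statement—is exactly what the paper does. The paper's entire proof is the single assertion that for $B\in\W_A$ the $\C_{\T}$-Hom-hammock of $B$, restricted to the complement of $\W_A\cup\W_{\tau A}$, lies inside that of $A$, with a reference to Figure~\ref{fig:hammocks}. Splitting into $\T$-maps and $\D$-maps is a natural way to verify this, and your $\T$-map half is essentially right (with the caveat that the forward $\T$-hammock of $T'$ is \emph{not} literally contained in that of $T_0$; only its restriction outside $\W_{T_0}$ is—but that is what you actually need).

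The $\D$-map half, however, has a genuine gap. You write that ``the ray from $T'$ meets the ray from $T_0$'', but distinct rays in a tube are disjoint: the ray through $(a',b')$ is $\{(a',b'+k):k\geq 0\}$, which coincides with the ray through $T_0=(a,n-1)$ only when $a'=a$, i.e.\ only for summands on the left edge of the wing. Even with the factorisation from Lemma~\ref{lem:Dfactoring}, a decomposition $T'\to Z\to X$ with $Z$ on the ray from $T'$ gives neither a map $T_0\to Z$ in general nor any guarantee that a composite $T_0\to Z\to X$ would be nonzero, so the desired $\D$-map $T_0\to X$ does not follow. The clean fix is not to route through Lemma~\ref{lem:Dfactoring} at all but to dualise your $\T$-argument: the $\D$-part of $\Hom_{\C_{\T}}(T',X)$ is $D\Hom_{\T}(X,\tau^2 T')$, and since $\tau^2 T'\in\W_{\tau^2 T_0}$ one checks directly (the same hammock geometry, read backwards) that the support of $\Hom_{\T}(-,\tau^2 T')$ restricted outside $\W_{T_0}\cup\W_{\tau T_0}$ lies in the support of $\Hom_{\T}(-,\tau^2 T_0)$. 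Combined with the $\T$-part, this is precisely the containment the paper states in one line.
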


\begin{proof}
For any indecomposable object $A$ and an indecomposable $B$ in the wing $\W_A$
determined by $A$, the restriction of the Hom-hammock of $B$ to $\ind \C_{\T}
\backslash \left( \W_A \cup \W_{\tau A} \right)$ is contained in the
Hom-hammock of $A$. (See Figure \ref{fig:hammocks}.)
\end{proof}

\begin{proof}[Proof of Corollary \ref{noCTOs}]
Let $T$ be a maximal rigid object and $T_0=(s,n-1)$ the top summand. Then the
objects $X_k=(s-1,kn-1)$, $k=1,2,3,...$, satisfy
$\Hom_{\C_{\T}}(T,X_k)=0$. (In fact, for $k>1$, these are the only objects
outside $\tau T$ on which $\Hom_{\C_{\T}}(T,-)$ vanishes.)

Now let $Y_k=\tau^{-1}X_k$. We then have
\[
\Ext_{\C_{\T}}^1(Y_k,T)=\Ext_{\C_{\T}}^1(T,Y_k)\simeq D\Hom_{\C_{\T}}(T,X_k)=0,
\]
but for $k>1$, $\Ext_{\C_{\T}}^1(T\amalg Y_k,T\amalg Y_k)\neq 0$, since $Y_k$
has self-extensions.
\end{proof}

As remarked after the definition of cluster structures in Section
\ref{section:clusterstructures}, an interpretation of condition (c) in the
definition is that the quiver of the endomorphism ring of the maximal rigid
object does not have 2-cycles. This is the case here:

\begin{prop} \label{prop:tubeclusterstructure}
The set of maximal rigid objects in $\C_{\T}$ has a cluster structure.
\end{prop}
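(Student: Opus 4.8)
The plan is to verify that the collection $\TT$ of maximal rigid objects in $\C_{\T}$ satisfies the hypothesis of Theorem \ref{thmmaxrigidcluster}, namely condition (c): for each maximal rigid $T$ and each indecomposable summand $T_i$, the middle terms $U_{T_i,T/T_i}$ and $U'_{T_i,T/T_i}$ of the two exchange triangles have no common indecomposable direct summand. Once this is established, Theorem \ref{thmmaxrigidcluster} applies verbatim and gives the cluster structure. As noted in Section \ref{section:clusterstructures}, condition (c) is equivalent to the Gabriel quiver of $\End_{\C_{\T}}(T)$ having no 2-cycles, so concretely I would show: there do not exist indecomposable summands $T_i \neq T_j$ of $T$ with arrows $T_i \to T_j$ and $T_j \to T_i$ simultaneously in the quiver of $\End_{\C_{\T}}(T)$.

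The key structural input is Proposition \ref{prop:tiltedbijection}: all summands of $T$ lie in the wing $\W_{T_0}$ determined by the top summand $T_0 = (s, n-1)$, and the configuration of summands inside this wing matches exactly a tilting module over $k\vec{A}$ with $\vec{A}$ linearly oriented of type $A_{n-1}$. The first step is therefore to understand $\Hom_{\C_{\T}}(T_a, T_b)$ for two summands $T_a, T_b \in \W_{T_0}$. By Lemma \ref{lem:Homspaces}, this Hom space splits as a space of $\T$-maps and a space of $\D$-maps. I expect to argue that the $\T$-maps, restricted to the wing, reproduce exactly the morphisms in $\mathrm{mod}\, k\vec{A}$ between the corresponding tilting summands — so on this part the quiver of $\End(T)$ agrees with the quiver of the (hereditary) endomorphism ring of the tilting module, which has no 2-cycles and no loops. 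The subtle contribution is from the $\D$-maps, which by Lemma \ref{lem:Dfactoring} must factor through the ray starting at $T_a$ and the coray ending at $T_b$; these are precisely the maps responsible for the loops that the introduction advertises.

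The heart of the argument is then a case analysis showing that a $\D$-map $T_a \to T_b$ between two distinct summands, when it exists, can only produce arrows going "one way around the tube", never a 2-cycle. More precisely, I would show: if there is an arrow $T_i \to T_j$ coming from a $\D$-map (equivalently, $T_j$ appears in an exchange triangle for $T_i$ via the $\D$-part), then the geometry of the ray/coray factorization forced by Lemma \ref{lem:Dfactoring}, together with the rigidity constraints $\Ext^1_{\C_{\T}}(T_i, T_j) = \Ext^1_{\C_{\T}}(T_j, T_i) = 0$ which by 2-CY duality and Lemma \ref{lem:Homspaces} translate into $\Hom_{\T}(T_j, \tau^2 T_i) = \Hom_{\T}(T_i, \tau^2 T_j) = 0$, pins down the relative position of $T_i$ and $T_j$ in the wing so tightly that no nonzero map $T_j \to T_i$ (of either type) can exist that does not factor through a third summand. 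Combined with the $\T$-map analysis — where a 2-cycle is already excluded because the wing carries the representation theory of a hereditary (directed) algebra — this rules out 2-cycles altogether. The loops that do occur arise when $T_i = T_j$ and there is a $\D$-map $T_i \to T_i$ not factoring through $T/T_i$, which is permitted by condition (c) and precisely the phenomenon our generalized definition accommodates.

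The main obstacle I anticipate is the bookkeeping in the case analysis of the previous paragraph: one must carefully track, for each possible relative configuration of $T_i$ and $T_j$ inside the wing $\W_{T_0}$ (nested wings, overlapping wings, disjoint wings), which of the two Hom directions receives a $\T$-map contribution and which receives a $\D$-map contribution, and then check that the potential 2-cycle is always killed by factorization through an intermediate summand or by a vanishing forced by the tilting condition. The coordinate system and Hom-/Ext-hammock pictures (Figures \ref{fig:coordinates} and \ref{fig:hammocks}) make each individual case routine, but assembling them into a clean statement — essentially that the quiver of $\End_{\C_{\T}}(T)$ is obtained from the hereditary $A_{n-1}$-tilting quiver by possibly adding loops at some vertices and never a reverse arrow — is where the real work lies. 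Everything else then follows immediately from Theorem \ref{thmmaxrigidcluster}.
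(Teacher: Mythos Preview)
Your approach is essentially the paper's: reduce to Theorem \ref{thmmaxrigidcluster} by verifying the no-2-cycle condition, split morphisms into $\T$-maps and $\D$-maps via Lemma \ref{lem:Homspaces}, and control the $\D$-part with Lemma \ref{lem:Dfactoring}. The paper's execution is more direct than the configuration case analysis you anticipate --- it shows that a nonzero $\T$-map $T_i \to T_j$ already forces $\Hom_{\C_{\T}}(T_j,T_i)=0$ outright, and that simultaneous $\D$-maps in both directions can only occur when $T_i,T_j$ lie on opposite edges of the wing, in which case one of the $\D$-maps factors through $T_0$.
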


\begin{proof}
By the fact that $\C_{\T}$ is a 2-Calabi-Yau triangulated category, and
Theorem \ref{thmmaxrigidcluster}, it is only necessary to show that for any
maximal rigid $T$ in $\C_{\T}$, there are no 2-cycles in the quiver of
$\End_{\C_{\T}}(T)$.

We note first that all the summands of $T$ lie in the wing $\W_{T_0}$, where
$T_0$ is the top summand of $T$. Suppose first that there is a non-zero
$\T$-map from $T_i$ to $T_j$, where $T_i$, $T_j$ are indecomposable direct
summands of $T$.

Since $\Ext^1_{\T}(T_j,T_i)=0$, it follows from the structure of $\T$ that there
is no non-zero $\T$-map from $T_i$ to $\tau^2 T_j$, and thus no non-zero
$\D$-map from $T_j$ to $T_i$. A non-zero $\T$-map from $T_j$ to $T_i$ cannot
arise from a path in the AR-quiver outside the wing (see Figure
\ref{fig:hammocks}), but it cannot arise from a path inside the wing either,
so there can be no non-zero $\T$-map from $T_j$ to $T_i$. Hence we see that
$\Hom_{\C_{\T}}(T_j,T_i)=0$.

Alternatively, suppose that there is a non-zero $\D$-map from $T_i$ to
$T_j$. By the above there can be no non-zero $\T$-map from $T_j$ to $T_i$. So
suppose that there is a non-zero $\D$-map from $T_j$ to $T_i$. Then we have
non-zero $\T$-maps from $T_i$ to $\tau^2 T_j$ and from $T_j$ to $\tau^2
T_i$. The only way for this to happen is for one of the summands
to be on the left hand edge of the wing and the other to be on the right hand
edge of the wing. Without loss of generality, assume $T_j$ is on the left edge
and $T_i$ is on the right edge. Then the non-zero $\D$-map from $T_j$ to $T_i$
factors through $T_0$ by Lemma \ref{lem:Dfactoring}, so there is no arrow from
$j$ to $i$.

Therefore, 2-cycles are not possible.
\end{proof}

One should notice that this set does not satisfy the definition of cluster
structures in \cite{birs}. The reason for this is that the quiver of
$\End_{\C_{T}}(T)$ will have a loop for all maximal rigid $T$. This is because
there is a non-zero $\D$-map from $T_0$ to itself. In addition, we have that
the only indecomposable objects in the wing of $T_0$ which $T_0$ has non-zero
maps to in $\C_{\T}$ are those on the right hand edge of the wing, and the
only indecomposable objects in the wing of $T_0$ which have non-zero maps in
$\C_{\T}$ to $T_0$ are those on the left hand edge of the wing. It follows
that the $\D$-map from $T_0$ to itself does not factor through any other
indecomposable object in the wing, and therefore it does not factor through
any other indecomposable direct summand of $T$.

\section{Relationship to type $B$ cluster
  algebras} \label{section:relationshiptypeB}
Cluster algebras were introduced in \cite{fz1}; see, for example, \cite{fr} or
\cite{fz4} for an introduction.

A simplicial complex was associated in \cite{fz3} to any finite root system, 
and it was conjectured there and later proved in \cite{cfz} that
these simplicial complexes are the face complexes of certain polytopes which
were called \emph{generalised associahedra}. In the finite type classification
of cluster algebras \cite{fz2}, it was shown that a cluster algebra is of
finite type if and only if its cluster complex is one of these simplicial
complexes.

The generalised associahedron associated to a type $B$ root system
turned out to be the \emph{cyclohedron}, also known as the Bott-Taubes
polytope \cite{bt}. This polytope was independently discovered by Simion
\cite{s}.

We will now recall the description of the exchange graph from \cite{fz3}
(which corresponds to the geometric description of the corresponding polytope
in \cite{s}).
Let $\G_n$ denote a regular $2n$-gon. The set of cluster variables in a type
$B_{n-1}$ cluster algebra is in bijection with the set $\DD_n$ of
\emph{centrally symmetric pairs of   diagonals} of $\G_n$, where the diameters
are included as degenerate pairs. Under this bijection, the clusters
correspond to the centrally symmetric triangulations of $\G_n$, and exchange
of a cluster variable corresponds to flipping either a pair of centrally
symmetric diagonals or a diameter.

In this section we define a bijection from the set of indecomposable rigid
objects in the cluster tube $\C_{\T_n}$ to the set $\DD_n$. This
map induces a correspondence between the maximal rigid objects and centrally
symmetric triangulations which is compatible with exchange. Thus rigid objects
in $\C_{\T_{n}}$ model the cluster combinatorics of type $B_{n-1}$ cluster
algebras.

We label the corners of $\G_n$ clockwise, say, from 1 to $2n$:
\[
\includegraphics[width=4cm]{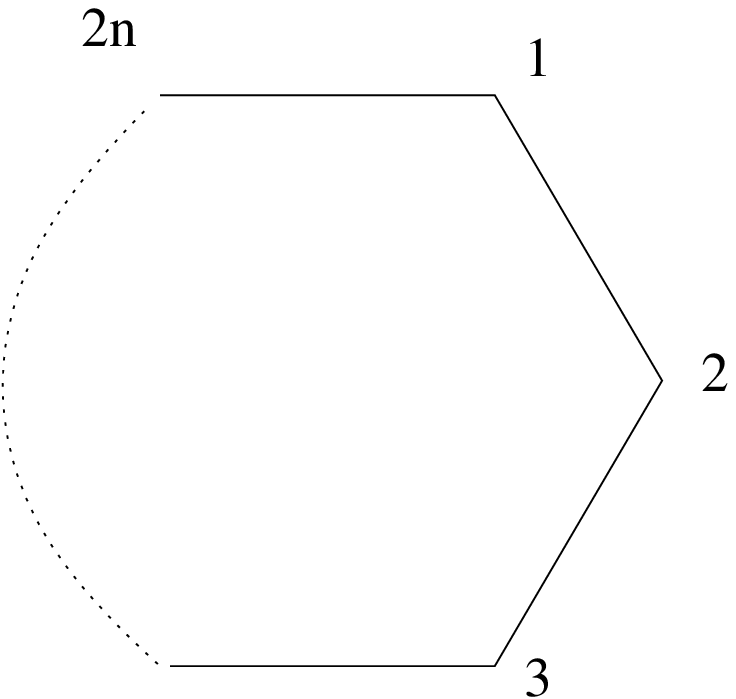}
\]
For two corners labelled $a$ and $b$ (which are neither equal nor neighbours),
we denote by $[a,b]$ the corresponding diagonal. Thus $[a,b]=[b,a]$, and
the centrally symmetric pairs of diagonals are given as $([a,b],[a+n,b+n])$.
(Here and in what follows, we reduce modulo $2n$ if necessary.) Since this
pair is uniquely determined by either of the two diagonals, we will sometimes
denote the pair by one of its representatives.

Now, for each indecomposable rigid object in $\C_{\T_n}$ we assign a
centrally symmetric pair of diagonals as follows:
\[
(a,b)\overset{\delta}{\longmapsto} ([a,a+b+1],[a+n,a+b+1+n])
\]
Note that the pairs of diagonals assigned to indecomposable objects in the
same ray or coray of $\C_{\T_n}$ all share a centrally symmetric pair of
corners. Objects of quasi-length 1 correspond to the shortest diagonals,
while the objects of quasi-length $n-1$ correspond to the diameters. See Figure
\ref{fig:B3triang}.

The following fact is readily verified.

\begin{lem} \label{lem:variablebijection}
The map $\delta$ defined above is a bijection from the set of indecomposable
rigid objects in $\C_{\T_n}$ to the set $\DD_n$ of centrally symmetric pairs of
diagonals of $\G_n$.
\end{lem}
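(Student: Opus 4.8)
The plan is to turn the statement into a counting argument together with a short injectivity check. First I would pin down the source: by the lemmas of this section, together with the description of the Ext-hammocks (Figure~\ref{fig:hammocks}), an indecomposable $(a,b)$ of $\C_{\T_n}$ is rigid if and only if $\ql(a,b)=b\le n-1$; so the indecomposable rigid objects are exactly the $(a,b)$ with $a\in\ZZZ/n$ and $1\le b\le n-1$, which is $n(n-1)$ of them. On the other side, the regular $2n$-gon has $\binom{2n}{2}-2n=n(2n-3)$ diagonals; exactly $n$ of these are diameters, and the remaining $2n(n-2)$ split into $n(n-2)$ centrally symmetric pairs, so $|\DD_n|=n(n-2)+n=n(n-1)$. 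Since both sets are finite of the same cardinality, it suffices to show that $\delta$ is well defined and injective.

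For well-definedness I would check two points. In $\delta(a,b)=([a,a+b+1],[a+n,a+b+1+n])$ the second diagonal is the image of the first under the half-turn $x\mapsto x+n$ of $\G_n$, so the pair is centrally symmetric; and $[a,a+b+1]$ is a genuine diagonal because $1\le b\le n-1$ makes its two endpoints differ by $b+1\in\{2,\dots,n\}$ modulo $2n$, which is neither $0$ nor $\pm1$. Moreover $[a,a+b+1]$ is a diameter precisely when $b+1=n$, i.e.\ $b=n-1$, in which case the pair degenerates, matching the statement in the text that quasi-length $n-1$ gives the diameters. Finally $\delta$ descends to $\ZZZ/n$ in the first coordinate, since $\delta(a+n,b)$ and $\delta(a,b)$ are the same unordered pair.

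For injectivity, suppose $\delta(a,b)=\delta(a',b')$. Because the unordered pair $\{[c,d],[c+n,d+n]\}$ is unchanged by simultaneously adding $n$ to both endpoints of each diagonal, we may choose representatives so that $[a,a+b+1]=[a',a'+b'+1]$ as unordered pairs of corners. Then either $a\equiv a'$ and $a+b+1\equiv a'+b'+1\pmod{2n}$, which forces $b=b'$ (both lie in $\{1,\dots,n-1\}$) and hence $(a,b)=(a',b')$; or $a\equiv a'+b'+1$ and $a'\equiv a+b+1\pmod{2n}$, which on adding gives $b+b'\equiv 2n-2\pmod{2n}$, hence $b+b'=2n-2$, hence $b=b'=n-1$, and then $a\equiv a'+n\pmod{2n}$, so $a\equiv a'\pmod n$ and the two objects coincide. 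Thus $\delta$ is injective, and by the cardinality count bijective. (If a constructive version is wanted, the same computation shows that a given pair has two candidate preimages whose $b$-values sum to $2n-2$ modulo $2n$, and exactly one of them lies in $\{1,\dots,n-1\}$ unless both equal $n-1$, in which case they describe the same object; this writes out $\delta^{-1}$ explicitly.)

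The only real care I anticipate is bookkeeping rather than any genuine difficulty: the first coordinate of an object lives in $\ZZZ/n$ while the corners of $\G_n$ live in $\ZZZ/2n$, and the degenerate (diameter) pairs must be tracked separately. The diagonal count and the two-moduli arithmetic are the places where a slip is most likely.
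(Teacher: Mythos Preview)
Your argument is correct. The paper itself gives no proof of this lemma beyond the sentence ``The following fact is readily verified,'' so there is nothing to compare against; your counting-plus-injectivity verification is precisely the kind of routine check the authors are leaving to the reader, and it is carried out carefully, including the modulus bookkeeping between $\ZZZ/n$ and $\ZZZ/2n$ and the degenerate diameter case.
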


\begin{figure}
\centering
\includegraphics[width=10cm]{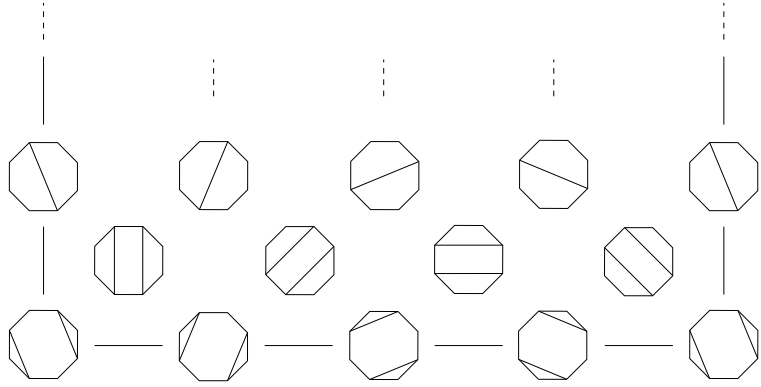}
\caption{The AR-quiver of $\C_{\T_{4}}$, with the indecomposable rigid objects
  replaced by their images under the map $\delta$.} \label{fig:B3triang}
\end{figure}

\begin{prop} \label{prop:crossing}
Let $T_1=(a,b)$ and $T_2=(c,d)$ be indecomposable rigid objects in
$\C_{\T_n}$. Then the number of crossing points of $\delta (a,b)$ and $\delta
(c,d)$ is equal to $2\dim \Ext^1_{\C_{\T}}(T_1,T_2)$.
\end{prop}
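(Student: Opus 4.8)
The plan is to compute both quantities explicitly in terms of the coordinates $(a,b)$ and $(c,d)$ and check they agree. By Lemma \ref{lem:Homspaces}, applied with the shift by $\tau$ built into the Ext-functor, we have $\Ext^1_{\C_{\T}}(T_1,T_2)\simeq \Hom_{\C_{\T}}(T_1,\tau^{-1}T_2[1])$, which via the same lemma decomposes as a $\T$-part and a $\D$-part; concretely $\dim\Ext^1_{\C_{\T}}(T_1,T_2)=\dim\Ext^1_{\T}(T_1,T_2)+\dim\Ext^1_{\T}(T_2,T_1)$, each summand being $0$ or $1$ since indecomposables in a tube have at most one-dimensional extension spaces in either direction. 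So the right-hand side is $2$, $1$, or $0$ according to whether $T_1,T_2$ have extensions in both directions, one direction, or neither. Using the coordinate description of the Ext-hammock from Figure \ref{fig:hammocks}, I would write down the precise inequalities on $(a,b),(c,d)$ (modulo $n$) that characterise each of these three cases.

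Next I would do the same for the diagonals. Two centrally symmetric pairs $\delta(a,b)=([a,a+b+1],[a+n,a+b+1+n])$ and $\delta(c,d)=([c,c+d+1],[c+n,c+d+1+n])$ can cross in $0$, $1$, or $2$ points: since a diagonal and its central translate are disjoint (or form a diameter), the possible crossings are between $[c,c+d+1]$ and each of the two diagonals of $\delta(a,b)$, and a crossing occurs exactly when the two endpoints of one diagonal separate the two endpoints of the other around the $2n$-gon. I would translate "separates" into an explicit cyclic inequality on the labels $a,a+b+1,c,c+d+1$ (and their shifts by $n$), being careful about the degenerate diameter case where one "diagonal" is self-centrally-symmetric and can contribute a crossing with multiplicity handled by the convention that a diameter counts once in $\DD_n$ but, as a genuine chord, can be crossed.

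Finally I would match the two computations: show that $[c,c+d+1]$ crosses $[a,a+b+1]$ if and only if there is a non-zero extension in one specified direction (say $\Ext^1_{\T}(T_1,T_2)\neq 0$), and that it crosses $[a+n,a+b+1+n]$ if and only if there is an extension in the other direction. Summing, the total crossing number is $2\dim\Ext^1_{\C_{\T}}(T_1,T_2)$. I expect the main obstacle to be purely bookkeeping: getting the reduction-modulo-$n$ conventions in the coordinate system to line up correctly with the reduction-modulo-$2n$ conventions on the $2n$-gon, and in particular handling the boundary/degenerate cases (quasi-length $n-1$ objects $\leftrightarrow$ diameters, and objects sharing a ray or coray, whose diagonals share an endpoint and hence can only cross in the interior) without double-counting. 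Once the dictionary between the two sets of inequalities is set up cleanly, the verification is a finite case check, which is why the statement is claimed to be "readily verified"; I would likely present it by reducing to the case $a=1$ using the cyclic symmetry of both sides, and then arguing geometrically from Figure \ref{fig:B3triang} rather than grinding through all inequalities.
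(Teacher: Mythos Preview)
Your plan is essentially the paper's own proof: reduce to $a=1$ by cyclic symmetry, split $\Ext^1_{\C_{\T}}(T_1,T_2)$ into the two $\T$-pieces (the paper writes them as $\Hom_{\T}(T_1,\tau T_2)$ and $\Hom_{\T}(T_2,\tau T_1)$, which is the same as your $\Ext^1_{\T}$ decomposition via AR duality), and match each piece with one explicit crossing condition on the coordinates. The paper then simply asserts that each condition is equivalent to the corresponding Hom-space being nonzero and one-dimensional, exactly as you anticipate.

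One correction to your bookkeeping: you claim the two pairs can cross in $0$, $1$, or $2$ points, arguing that only $[c,c+d+1]$ need be tested against the two diagonals of $\delta(a,b)$. That is not right. The second diagonal $[c+n,c+d+1+n]$ of $\delta(c,d)$ also crosses diagonals of $\delta(a,b)$; by central symmetry it contributes the \emph{same} number of crossings as $[c,c+d+1]$ does, so the total crossing count is $0$, $2$, or $4$ (and is $4$ precisely when both $\Ext^1_{\T}$-spaces are nonzero). This is what makes the factor of $2$ in the statement come out correctly --- your final line ``the total crossing number is $2\dim\Ext^1_{\C_{\T}}(T_1,T_2)$'' is right, but the intermediate claim contradicts it. With this adjustment (and the corresponding multiplicity convention when one or both objects have quasi-length $n-1$ and the ``pair'' degenerates to a diameter counted twice), your argument goes through and coincides with the paper's.
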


\begin{proof}
Without loss of generality, we may assume that $a=1$. We have
\begin{eqnarray*}
\delta(1,b) & = & ([1,b+2],[n+1,b+n+2]) \\
\delta(c,d) & = & ([c,c+d+1],[c+n,c+d+n+2]).
\end{eqnarray*}
It is easy to check that $\delta (1,b)$ and $\delta (c,d)$ cross if and only
if one of the following two conditions holds:
\begin{itemize}
\item[(i)] $1<c<b+2$ and $c+d>b+1$;
\item[(ii)] $1<c+d+1-n<b+2$ and $1<c<n+1$
\end{itemize}
Furthermore, the number of crossing points is 4 when both conditions hold, and
is 2 if only one holds. See Figure \ref{fig:crossings1}.
\begin{figure}
\centering
\includegraphics[width=13cm]{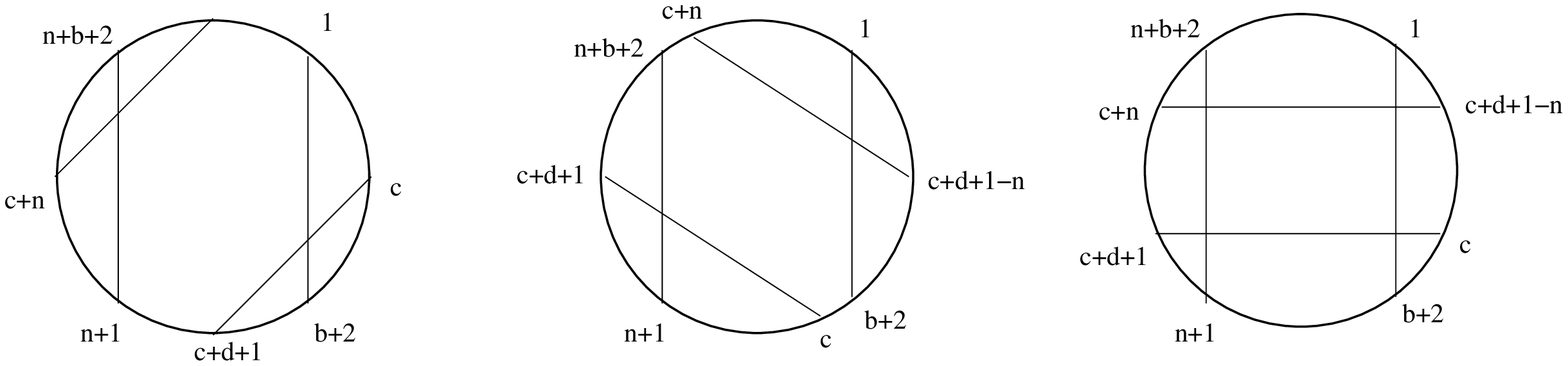}
\caption{Three cases in the proof of Proposition \ref{prop:crossing}. Left,
  condition (i) only holds; middle, condition (ii) only holds; right,
  conditions (i) and (ii) both hold.} \label{fig:crossings1}
\end{figure}

From the structure of the tube, it can be checked that condition (i) holds if
and only if $\Hom_{\T}(T_1,\tau T_2)\neq 0$, and in this case $\dim
\Hom_{\T}(T_1,\tau T_2)=1$. Similarly, condition (ii) holds if and only if
$\Hom_{\T}(T_2,\tau T_1)\neq 0$, and in this case $\dim \Hom_{\T}(T_2,\tau
T_1)=1$. By Lemma \ref{lem:Homspaces} and the Auslander-Reiten formula,
\[
\Ext^1_{\C_{\T}}(T_1,T_2)= D\Hom_{\T}(T_2,\tau T_1)\amalg \Hom_{\T}(T_1,\tau T_2)
\]
and the result follows.
\begin{figure}
\centering
\includegraphics[width=12cm]{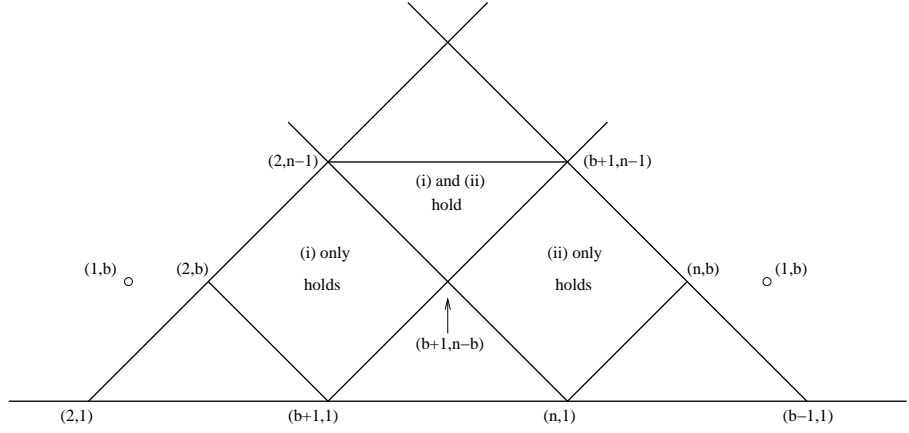}
\caption{Three situations for extensions with the object
  $(1,b)$, as in the proof of Proposition
  \ref{prop:crossing}} \label{fig:crossings2}
\end{figure}
\end{proof}

\begin{cor} \label{cor:tubeBbijection}
The map $\delta$ induces a bijection between the indecomposable rigid objects
in $\C_{\T_n}$ and the cluster variables in a type $B_{n-1}$ cluster algebra, and
under this bijection, the maximal rigid objects correspond to the clusters.
\end{cor}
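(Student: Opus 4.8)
The plan is to combine Lemma~\ref{lem:variablebijection}, Proposition~\ref{prop:crossing} and Proposition~\ref{prop:tubeclusterstructure} with the known combinatorial description of the type $B_{n-1}$ cluster complex in terms of centrally symmetric triangulations of $\G_n$. First I would recall that, by Lemma~\ref{lem:variablebijection}, $\delta$ is already a bijection from indecomposable rigid objects of $\C_{\T_n}$ onto $\DD_n$, and that $\DD_n$ is in bijection with the cluster variables of a type $B_{n-1}$ cluster algebra. So it only remains to check that $\delta$ carries maximal rigid objects to centrally symmetric triangulations and is compatible with exchange.

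Next I would show that a set $S$ of indecomposable rigid objects is a maximal rigid object of $\C_{\T_n}$ (i.e.\ its direct sum is) if and only if $\delta(S)$ is a centrally symmetric triangulation of $\G_n$. The key tool is Proposition~\ref{prop:crossing}: two rigid objects $T_1,T_2$ satisfy $\Ext^1_{\C_{\T}}(T_1,T_2)=0$ precisely when the pairs $\delta(T_1),\delta(T_2)$ have no crossing point, i.e.\ are ``compatible'' diagonals. Hence a rigid object with pairwise-$\Ext$-free summands maps to a set of pairwise noncrossing centrally symmetric pairs of diagonals, and conversely. Maximality of the rigid object then corresponds to maximality of the noncrossing set, which is exactly a centrally symmetric triangulation of $\G_n$; one can see the cardinalities match using Proposition~\ref{prop:tiltedbijection} (a maximal rigid object has $n-1$ summands, matching the $n-1$ diagonals in a centrally symmetric triangulation of a $2n$-gon). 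This gives the bijection between maximal rigid objects and clusters.

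For compatibility with exchange, I would use the weak cluster structure (Theorem I.1.10(a) of \cite{birs}, valid since $\C_{\T_n}$ is Hom-finite 2-CY) together with Proposition~\ref{prop:tubeclusterstructure}: replacing an indecomposable summand $T_k$ of a maximal rigid $T$ by its unique complement $T_k^*$ corresponds, under $\delta$, to removing one centrally symmetric pair of diagonals from the triangulation and inserting the unique other pair that completes it again to a triangulation — which is exactly the flip operation describing exchange in the type $B_{n-1}$ cluster algebra (and a diameter flips to a diameter, matching the degenerate case). Since both exchange graphs are connected and we have a bijection on vertices that respects the single-step mutation/flip, the two exchange graphs are isomorphic, so $\delta$ induces the desired correspondence between clusters. (If one also wants the $B$-matrix statement, one notes that Proposition~\ref{prop:crossing} identifies the matrix $B_T$ built from the exchange triangles with the signed adjacency data of the triangulation $\delta(T)$, and Proposition~\ref{prop:tubeclusterstructure} then forces FZ matrix mutation on both sides to agree.)

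The main obstacle is the equivalence ``maximal rigid $\Leftrightarrow$ centrally symmetric triangulation'': one must be careful that $\Ext$-vanishing between $T_1$ and $T_2$ in $\C_{\T}$ is genuinely symmetric and corresponds to \emph{geometric} noncrossing of the two centrally symmetric pairs (including the degenerate diameter cases), and that no extra compatible objects can be added outside a given wing — but this is already controlled by the wing description in Section~\ref{section:maxrigidintubes} and by Proposition~\ref{prop:crossing}. Everything else is a matter of matching cardinalities and invoking connectedness of the exchange graphs.
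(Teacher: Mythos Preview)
Your core argument is correct and matches the paper's own proof: invoke Lemma~\ref{lem:variablebijection} for the bijection with $\DD_n$, then use Proposition~\ref{prop:crossing} to identify ``pairwise $\Ext^1$-free'' with ``pairwise noncrossing'', so that maximal rigid objects correspond to maximal noncrossing collections, i.e.\ centrally symmetric triangulations, which by \cite{fz2,fz3} are exactly the clusters.

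However, everything from your third paragraph onward (compatibility with exchange, connectedness of exchange graphs, the $B$-matrix remark) is not needed for this corollary as stated and does not appear in the paper's proof here; the corollary asserts only the bijection on objects and on maximal collections, not compatibility of mutations. That further compatibility is established later, in the final Theorem, using Proposition~\ref{prop:typeBmatrix} and Theorem~\ref{thmmaxrigidcluster}. Your appeal to Proposition~\ref{prop:tubeclusterstructure} and to connectedness is therefore extraneous for the present statement (and your parenthetical claim that Proposition~\ref{prop:crossing} identifies $B_T$ with signed adjacency data of the triangulation is not something the paper proves or uses). The cardinality check via Proposition~\ref{prop:tiltedbijection} is a harmless sanity check but also unnecessary once you have the equivalence of maximality on both sides.
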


\begin{proof}
By the work of \cite{fz2,fz3}, we need to show that the image under
$\delta$ of the summands of a maximal rigid object coincides with a set of
pairs of diagonals which form a centrally symmetric triangulation of
$\G_n$. This is clear from Lemma \ref{lem:variablebijection} and Proposition
\ref{prop:crossing}.
\end{proof}

Now let $T_{\textrm{init}}$ be the zig-zag maximal rigid object
\[
T_{\textrm{init}}=\coprod_{i=1}^t (i,n-2i+1) \amalg (i,n-2i)
\]
where $t=\frac{n}{2}$ for $n$ even and $t=\frac{n-1}{2}$ for $n$ odd, and
any expression with zero in the last coordinate is to be disregarded.

\begin{prop} \label{prop:typeBmatrix}
The Cartan counterpart of the matrix $B_{T_{\textrm{init}}}$ is the Cartan
matrix for the root system of type $B_{n-1}$.
\end{prop}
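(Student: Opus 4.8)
The plan is to compute the matrix $B_{T_{\textrm{init}}}$ explicitly from the exchange triangles of the zig-zag object, and then verify that its Cartan counterpart (the matrix obtained by replacing diagonal entries with $2$ and off-diagonal entries $b_{ij}$ with $-|b_{ij}|$) is the $B_{n-1}$ Cartan matrix. First I would fix the convention: $T_{\textrm{init}}$ consists of the indecomposables $(i,n-2i+1)$ and $(i,n-2i)$ for $i$ in the relevant range, all lying in the wing $\W_{T_0}$ where $T_0=(1,n-1)$ is the top summand. By Proposition~\ref{prop:tiltedbijection}, this corresponds to a tilting module over $\vec{A}$ of type $A_{n-1}$ together with a choice of top vertex, and the zig-zag choice is precisely the one whose quiver (with the extra loop at $T_0$) is the standard linear $B_{n-1}$ diagram. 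So I expect $B_{T_{\textrm{init}}}$ to have $b_{T_0,T_0}=0$ but a nonzero ``multiplicity $2$'' somewhere adjacent to $T_0$ that produces the double bond of $B_{n-1}$.

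The key computational step is to read off, for each indecomposable summand $S$ of $T_{\textrm{init}}$, the two exchange triangles $S^*\to U_{S,\ol{T}}\to S\to$ and $S\to U'_{S,\ol{T}}\to S^*\to$, and tabulate which other summands appear in the middle terms with what multiplicity. For summands $S\neq T_0$ this is essentially the $A_{n-1}$ computation: the entries $b_{ij}$ are $0,\pm 1$ following the arrows of the linear $A_{n-1}$ quiver, since away from $T_0$ there are no loops and the argument after the definition of $B_T$ applies (multiplicity equals number of arrows). The only place something new happens is at the summand $S$ sitting directly ``below'' $T_0$ on the edge of the wing: because $T_0$ carries a loop (the self-$\D$-map, as noted after Proposition~\ref{prop:tubeclusterstructure}), the exchange triangle for that neighbour $S$ will contain $T_0$ with multiplicity $2$ — one $\T$-map and one $\D$-map contribution — while the exchange triangle for $T_0$ contains $S$ with multiplicity $1$. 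Concretely I would compute the exchange triangle of $(1,n-1)=T_0$ with respect to $T_{\textrm{init}}/T_0$ and find $T_0^*=(2,n-3)$ (or the analogous object) with the middle term involving $(1,n-3)$ once; and dually compute the exchange triangles of $(1,n-3)$, finding $T_0$ appearing with multiplicity $2$ on one side. This yields $b_{S,T_0}=\pm 2$ and $b_{T_0,S}=\mp 1$ — consistent with Lemma~\ref{signskewsymm} — which is exactly the off-diagonal pattern $\left(\begin{smallmatrix}\ast & -2\\ -1 & \ast\end{smallmatrix}\right)$ of the $B_{n-1}$ Cartan matrix after passing to the Cartan counterpart.

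Assembling these local computations, $B_{T_{\textrm{init}}}$ is the matrix with $b_{i,i+1}$ and $b_{i+1,i}$ of opposite sign and absolute value $1$ for all adjacent pairs except the pair $\{S,T_0\}$, where the absolute values are $1$ and $2$. Its Cartan counterpart therefore has $-1$ in all off-diagonal positions corresponding to edges of the linear $A_{n-1}$ chain except the terminal edge at $T_0$, where it has a $-1$ and a $-2$; and $2$ on the diagonal. This is precisely the Cartan matrix of type $B_{n-1}$ (in the standard numbering where the double arrow sits at one end of the Dynkin diagram), which is what we wanted to show. The main obstacle I anticipate is the bookkeeping for small $n$ and for the parity of $n$ (the definition of $t$ and the ``disregard zero last coordinate'' convention mean the bottom of the zig-zag behaves slightly differently for $n$ even versus odd), together with carefully justifying the multiplicity $2$ at $T_0$: one must show the relevant middle term genuinely has $T_0$ as a summand twice, i.e.\ that both a $\T$-map and a $\D$-map into the approximation are needed and neither factors through the other nor through any other summand. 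This follows from the Hom-hammock description in Figure~\ref{fig:hammocks} and Lemma~\ref{lem:Dfactoring}, exactly as in the proof of Proposition~\ref{prop:tubeclusterstructure}, but it is the step that requires genuine care rather than routine computation.
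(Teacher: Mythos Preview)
Your overall strategy---list the exchange triangles for the zig-zag object and read off the matrix---is exactly what the paper does. But you have the location of the multiplicity~$2$ backwards, and this is not cosmetic: carried through, it produces the transpose of the intended matrix (type $C_{n-1}$ rather than $B_{n-1}$). In the paper's computation it is the exchange triangle for the \emph{top} summand $T_0=(1,n-1)$ that reads
\[
T_0^* \longrightarrow T_2 \amalg T_2 \longrightarrow T_0 \longrightarrow,
\]
so $T_0$'s triangle contains the neighbour $T_2=(1,n-2)$ twice, while $T_2$'s triangle $T_2\to T_0\amalg T_3\to T_2^*\to$ contains $T_0$ only once. The mechanism is precisely the loop: $\End_{\C_{\T}}(T_0)$ is two-dimensional (identity and the $\D$-loop $\ell$), whereas $\End_{\C_{\T}}(T_2)=k$. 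For the minimal left $\add(T/T_2)$-approximation $T_2\to T_0\amalg\cdots$ a single copy of $T_0$ suffices, because both maps $T_2\to T_0$---the $\T$-map $\iota$ and the $\D$-map $\ell\iota$---factor through $\iota$ via post-composition with $\End_{\C_{\T}}(T_0)$. For the minimal right $\add(T/T_0)$-approximation of $T_0$, however, two copies of $T_2$ are required, since $\End_{\C_{\T}}(T_2)$ is too small to manufacture the $\D$-map from the $\T$-map by pre-composition. This gives $b_{T_0,T_2}=-2$ and $b_{T_2,T_0}=1$, hence the $B_{n-1}$ Cartan counterpart with the $-2$ in row $T_0$.

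There are also some coordinate slips to fix: the neighbour of $T_0$ in $T_{\textrm{init}}$ is $(1,n-2)$, not $(1,n-3)$; and $T_0^*$ cannot be $(2,n-3)$, since that is the summand $T_3$ of $T_{\textrm{init}}$ itself. The second exchange triangle $T_0\to 0\to T_0^*\to$ shows $T_0^*\simeq T_0[1]=\tau T_0=(n,n-1)$.
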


\begin{proof}
We recall that the Cartan counterpart of a matrix $B=(b_{ij})$ is the matrix
$A(B)=(a_{ij})$ given by $a_{ii}=2$ and $a_{ij}=-|b_{ij}|$ when $i\neq j$.

For this proof, we set $T_i$ to be the summand of $T_{\textrm{init}}$ with
quasi-length $n-i$, for $i=1,...,n-1$. In particular the top summand is $T_1$.

For convenience, we define $T_n=0$. For each $T_i$ with $i$ even and $2\leq
i\leq n-1$, the exchange triangles are
\[
T^*_i \to 0 \to T_i \to
\]
\[
T_i \to T_{i-1}\amalg T_{i+1} \to T^*_i \to
\]
while for $i$ odd and $1<i\leq n-1$, the exchange triangles are
\[
T^*_i \to T_{i-1}\amalg T_{i+1} \to T_i \to
\]
\[
T_i \to 0 \to T^*_i \to
\]
In the quiver of $\End_{\C_{\T}}(T_{\textrm{init}})$, there is a loop on the
vertex corresponding to the summand $T_1$. (See comment after Proposition
\ref{prop:tubeclusterstructure}.) However, twice around this loop is a zero
relation, so the exchange triangles for this summand are
\[
T^*_1 \to T_2\amalg T_2 \to T_1 \to
\]
\[
T_1 \to 0 \to T^*_1 \to
\]

So the matrix of the exchange triangles is
\[
B_{T_{\textrm{init}}}=
\left(
\begin{array}{cccccc}
0 & -2 & 0 & 0 & \cdots & 0 \\
1 & 0 & 1 & 0 & \cdots & 0 \\
0 & -1 & 0 & -1 & & 0 \\
0 & 0 & 1 & 0 & \ddots & \\
\vdots & \vdots & & \ddots & & (-1)^{n-2} \\
0 & 0 & & & (-1)^{n-1} & 0
\end{array}
\right)
\]
and the Cartan counterpart is
\[
A(B_{T_{\textrm{init}}})=
\left(
\begin{array}{cccccc}
2 & -2 & 0 & 0 & \cdots & 0 \\
-1 & 2 & -1 & 0 & \cdots & 0 \\
0 & -1 & 2 & -1 & & 0 \\
0 & 0 & -1 & 2 & \ddots & \\
\vdots & \vdots & & \ddots & & -1 \\
0 & 0 & & & -1 & 2
\end{array}
\right)
\]
\end{proof}

Noting that for a cluster algebra of type $B$, a cluster determines
its seed \cite{fz2}, we have thus proved the following theorem:

\begin{theorem}
There is a bijection between the indecomposable rigid objects of a cluster
tube of rank $n$ and the cluster variables of a cluster algebra of type
$B_{n-1}$, inducing a bijection between the maximal rigid objects of the
cluster tube and the clusters of the cluster algebra. Furthermore, the
exchange matrix of a seed coincides with the matrix associated to the
corresponding maximal rigid object in Section
\ref{section:clusterstructures}.
\end{theorem}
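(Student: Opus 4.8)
The plan is to assemble the theorem from the results already established rather than argue from scratch. The first two assertions are exactly the content of Corollary~\ref{cor:tubeBbijection}: the map $\delta$ restricts to a bijection between the indecomposable rigid objects of $\C_{\T_n}$ and the cluster variables of a type $B_{n-1}$ cluster algebra, and it carries the summand-sets of the maximal rigid objects onto the centrally symmetric triangulations of $\G_n$, i.e.\ onto the clusters. Moreover, as recorded in the discussion around that corollary, $\delta$ intertwines exchange of an indecomposable summand of a maximal rigid object with a flip of a centrally symmetric pair of diagonals (or of a diameter), i.e.\ with a mutation of the cluster. So it remains only to identify, for every maximal rigid $T$, the matrix $B_T$ of Section~\ref{section:clusterstructures} with the exchange matrix of the seed sitting on the cluster $\delta(T)$.

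I would pin down the base case first. Fix the type $B_{n-1}$ cluster algebra to be the one with initial cluster $\delta(T_{\textrm{init}})$ and initial exchange matrix $B_{T_{\textrm{init}}}$, where $T_{\textrm{init}}$ is the zig-zag object. By Proposition~\ref{prop:typeBmatrix} the Cartan counterpart of $B_{T_{\textrm{init}}}$ is the Cartan matrix of type $B_{n-1}$, and the diagram of $B_{T_{\textrm{init}}}$ is acyclic (it is an alternating orientation of the path $A_{n-1}$, with the edge joining the two summands of largest quasi-length doubled). Hence, by the finite type classification of Fomin--Zelevinsky \cite{fz2}, this seed generates a cluster algebra of type $B_{n-1}$, whose cluster combinatorics is the one modelled by centrally symmetric triangulations of $\G_n$ via \cite{fz3}; with this choice the desired identification therefore holds at $T_{\textrm{init}}$ by construction.

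Then I would propagate along the exchange graph. The set $\TT$ of maximal rigid objects of $\C_{\T_n}$ is finite (Proposition~\ref{prop:tiltedbijection}), and via the exchange-compatible bijection $\delta$ onto the clusters of the $B_{n-1}$ cluster algebra — whose exchange graph, being the $1$-skeleton of the cyclohedron, is connected — the exchange graph of $\TT$ is connected as well; thus every maximal rigid $T$ is obtained from $T_{\textrm{init}}$ by a finite sequence of exchanges. By Proposition~\ref{prop:tubeclusterstructure}, $\TT$ has a cluster structure, so condition~(d) in the definition says that exchanging the summand $T_k$ replaces $B_T$ by $\mu_k(B_T)$ — precisely the Fomin--Zelevinsky matrix mutation relating the exchange matrices of the corresponding seeds. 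Inducting along any exchange path from $T_{\textrm{init}}$ to $T$ gives that $B_T$ equals the exchange matrix of the seed on $\delta(T)$. Since in a type $B$ cluster algebra a cluster determines its seed \cite{fz2}, the exchange matrix is a genuine function of the cluster, so this identification is independent of the chosen path (as it must be, $B_T$ being defined intrinsically from the exchange triangles of $T$), and the theorem follows.

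The main obstacle, and essentially the only nontrivial point, is the consistency in the middle step: one must ensure that the explicitly computed matrix $B_{T_{\textrm{init}}}$ is literally an initial exchange matrix of the type $B_{n-1}$ cluster algebra in the labelling provided by $\delta$ — not merely mutation-equivalent to one — and that the exchange graph of maximal rigid objects is genuinely connected, so that the inductive propagation from $T_{\textrm{init}}$ reaches every maximal rigid object. Both points reduce to combining the finite type classification with the exchange-compatibility of $\delta$ established in Corollary~\ref{cor:tubeBbijection}, but they are the places where the bookkeeping must be done with care.
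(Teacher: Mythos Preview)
Your proposal is correct and follows essentially the same approach as the paper: invoke Corollary~\ref{cor:tubeBbijection} for the bijections, identify $B_{T_{\textrm{init}}}$ with a type $B_{n-1}$ initial exchange matrix via Proposition~\ref{prop:typeBmatrix}, and then propagate using the cluster structure (Theorem~\ref{thmmaxrigidcluster}/Proposition~\ref{prop:tubeclusterstructure}) together with the fact that a cluster determines its seed in type $B$. You are more explicit than the paper about the connectivity of the exchange graph and the inductive step along it, but these are exactly the points the paper's one-line appeal to Theorem~\ref{thmmaxrigidcluster} and Proposition~\ref{prop:typeBmatrix} is tacitly relying on.
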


\begin{proof}
The first part of the theorem has been shown above (Corollary
\ref{cor:tubeBbijection}). The second part follows from Theorem
\ref{thmmaxrigidcluster} and Proposition
\ref{prop:typeBmatrix}, noting that the exchange matrix corresponding to the
initial root cluster $\{ -\alpha_1, -\alpha_2, \ldots, -\alpha_{n-1} \}$ of
type $B_{n-1}$ in \cite{fz2} is the matrix appearing in Proposition
\ref{prop:typeBmatrix} (see also Figure 5 in \cite{fz3}).
\end{proof}

\section*{Acknowledgements}
The first and third named author wish to thank Robert Marsh
and the School of Mathematics at the University of Leeds for their kind
hospitality in the spring of 2008.

The first author is supported by an NFR Storforsk-grant. The second author
acknowledges support from the EPSRC, grant number EP/C01040X/2, and is also
currently an EPSRC Leadership Fellow, grant number EP/G007497/1.

\end{document}